\newif\ifcard
\newcommand{\N}{\mathbb{N}}
\newcommand\pow{\mathbb P}
\newcommand\dc{\mathop{\downarrow}\nolimits}
\newcommand\upc{\mathop{\uparrow}\nolimits}
\newtheorem{theorem}{Theorem}
\newtheorem{claim}[theorem]{Claim}
\newtheorem{lemma}{Lemma}
\newtheorem{proposition}[theorem]{Proposition}
\theoremstyle{remark}
\newtheorem{remark}[theorem]{Remark}
\def\Power #1 { \pow (#1) }
\def\Bidom #1 { {\mathfrak P} (#1) }
\def\centerpicture #1 by #2 (#3){\leavevmode
        \vbox to #2{
        \hrule width #1 height 0pt depth 0pt
        \vfill
        \special{pictfile #3}}}
\title[Topological spaces with no infinite discrete subspace]{A Few Characterizations  of Topological Spaces with No Infinite Discrete Subspace}
\author[J. Goubault-Larrecq]{Jean Goubault-Larrecq}
\address{Universit\'e Paris-Saclay, CNRS, ENS Paris-Saclay, Laboratoire M\'ethodes Formelles, 91190, Gif-sur-Yvette, France.}
\email{goubault@lsv.fr}
\author[M. Pouzet]{Maurice Pouzet}
\address{Univ. Lyon, Universit\'e Claude-Bernard Lyon1, CNRS UMR 5208, Institut Camille Jordan,  43 bd. 11 Novembre 1918, 69622 Villeurbanne Cedex, France and Mathematics \& Statistics Department, University of Calgary, Calgary, Alberta, Canada T2N 1N4}
\email{pouzet@univ-lyon1.fr, mpouzet@ucalgary.ca }
\date{\today}
\keywords{Ordered set, closure system, Noetherian topological space, well-quasi-order}
\subjclass[2010]{Primary 
54G99; 
Secondary 
06A07 
}
\thanks{The first author was supported by grant ANR-17-CE40-0028 of the French National Research Agency ANR (project BRAVAS)}
\begin{document}

\begin{abstract}
  We give several characteristic properties of FAC spaces, namely
  topological spaces with no infinite discrete subspace. The first one
  was obtained in 2019 by the first author, and states that every
  closed set is a finite union of irreducible closed subsets.  The
  full result extends well-known characterizations of posets with no
  infinite antichain.  One of them is that FAC spaces are,
  equivalently, topological spaces in which every closed set contains
  a dense Noetherian subspace, or spaces in which every Hausdorff
  subspace is finite, or in which no subspace has any infinite
  relatively Hausdorff subset.  The latter comes with a nice min-max
  property, extending an observation of Erd\H os and Tarski in the
  case of posets: on spaces with no infinite relatively Hausdorff
  subset, the cardinalities of relatively Hausdorff subsets are
  bounded, and the least upper bound is also the least cardinality of
  a family of closed irreducible subsets that cover the space.
\end{abstract}

\maketitle

\noindent
\begin{minipage}{0.25\linewidth}
  \includegraphics[scale=0.2]{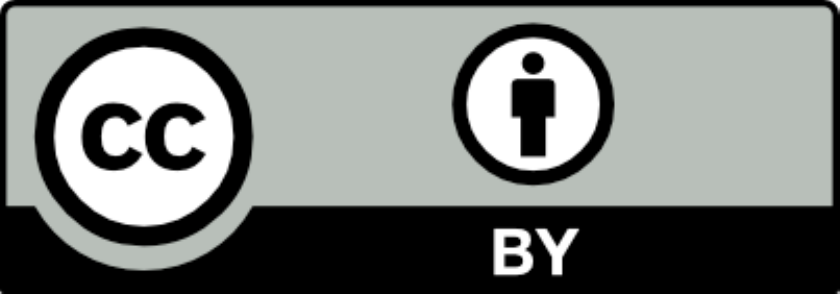}
\end{minipage}
\begin{minipage}{0.74\linewidth}
  \scriptsize
  For the purpose of Open Access, a CC-BY public copyright licence has
  been applied by the authors to the present document and will be
  applied to all subsequent versions up to the Author Accepted
  Manuscript arising from this submission.
\end{minipage}

\section{Introduction}

A topological space $T:=(E, \mathcal F)$, where $\mathcal F$ is the
set of closed subsets, is \emph{Noetherian} if every descending
sequence of closed subspaces is stationary. A subset $X$ of $E$ is
\emph{discrete} if and only if the induced topology on $X$ is the
discrete topology, namely if every subset of $X$ is closed with
respect to the induced topology. A closed subset is \emph{irreducible}
if it is non-empty and not the union of two proper closed subsets.
Our main objective is to show the following.

\begin{theorem}
  \label{theorem-main}
  The following properties are equivalent for a topological space
  $T:=(E, \mathcal F)$.
  \begin{enumerate}[label=(\roman*)]
  \item No infinite subset of $E$ is discrete;
  \item Every closed set is a finite union of irreducible closed subsets;
  \item Every closed set contains a dense subset on which the induced
    topology is Noetherian.
  \end{enumerate}
\end{theorem}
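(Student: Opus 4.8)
The plan is to establish the cycle (i) $\Rightarrow$ (iii) $\Rightarrow$ (ii) $\Rightarrow$ (i). Throughout I write $\overline A$ for closure and $x\preceq y$ for the specialization preorder, i.e. $x\in\overline{\{y\}}$, and I use freely that $\overline{A\cup B}=\overline A\cup\overline B$, that the closure of an irreducible subspace is irreducible, and that a noetherian space is a finite union of irreducible closed subspaces (otherwise repeatedly split off a proper closed piece that is still not such a union, producing an infinite strictly descending chain of closed sets).

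\emph{(iii) $\Rightarrow$ (ii).} Given a closed $C$, pick $D\subseteq C$ dense with noetherian subspace topology. Then $D=I_1\cup\cdots\cup I_n$ with each $I_j$ irreducible and closed in $D$, whence $C=\overline D=\overline{I_1}\cup\cdots\cup\overline{I_n}$ exhibits $C$ as a finite union of irreducible closed subsets of $T$.

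\emph{(ii) $\Rightarrow$ (i).} Suppose $X\subseteq E$ is infinite and discrete, and write $\overline X=C_1\cup\cdots\cup C_n$ with each $C_i$ irreducible closed, using (ii). I claim $|C_i\cap X|\le 1$ for every $i$, which forces $|X|\le n$ and is absurd. Indeed, if $a\ne b$ both lay in $C_i\cap X$, then, $X$ being discrete, there are closed sets $F,G$ of $T$ with $X\cap F=X\setminus\{a\}$ and $X\cap G=\{a\}$; hence $a\notin\overline{X\setminus\{a\}}\subseteq F$, and $b\notin\overline{\{a\}}$ since $\overline{\{a\}}\cap X\subseteq G\cap X=\{a\}$. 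As $C_i\subseteq\overline X=\overline{\{a\}}\cup\overline{X\setminus\{a\}}$ and $C_i$ is irreducible, either $C_i\subseteq\overline{\{a\}}$ (contradicting $b\notin\overline{\{a\}}$) or $C_i\subseteq\overline{X\setminus\{a\}}$ (contradicting $a\notin\overline{X\setminus\{a\}}$).

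\emph{(i) $\Rightarrow$ (iii).} This is the crux. Fix a closed $C$ and build $D=\{p_\alpha:\alpha<\lambda\}\subseteq C$ by transfinite recursion: having chosen the $p_\beta$ for $\beta<\alpha$, stop if $\overline{\{p_\beta:\beta<\alpha\}}=C$, and otherwise take $p_\alpha\in C\setminus\overline{\{p_\beta:\beta<\alpha\}}$. The closures strictly increase, so the process stops at some $\lambda$ and $\overline D=C$; moreover $p_\alpha\not\preceq p_\beta$ whenever $\beta<\alpha$, so $D$ has no infinite strictly $\preceq$-descending sequence (its indices would strictly descend). It remains to prove $D$ noetherian. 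If not, choose closed subsets $E_0\supsetneq E_1\supsetneq\cdots$ of $D$ and points $q_n\in E_n\setminus E_{n+1}$; then $q_n\in E_m$ iff $m\le n$, the $q_n$ are pairwise distinct, and $q_i\prec q_j$ forces $i>j$ (since $q_i$ lies in every closed subset of $D$ containing $q_j$, in particular in $E_j$). Hence $Q=\{q_n:n\in\mathbb{N}\}$ has no infinite $\preceq$-ascending sequence (its indices would strictly descend) and, by the previous remark, none descending; being an infinite poset with no infinite chain it contains, by Ramsey's theorem, an infinite $\preceq$-antichain $q_{n_0},q_{n_1},\dots$ with $n_0<n_1<\cdots$. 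For each $k$ one has $\{q_{n_\ell}:\ell>k\}\subseteq E_{n_k+1}$, a closed subset of $D$ missing $q_{n_k}$, while $q_{n_k}\notin\overline{\{q_{n_\ell}\}}$ for the finitely many $\ell<k$; therefore $q_{n_k}\notin\overline{\{q_{n_\ell}:\ell\ne k\}}$, so $\{q_{n_k}:k\in\mathbb{N}\}$ is an infinite discrete subspace of $E$, contradicting (i).

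I expect the last implication to be where the real work lies: one must choose $D$ so that it is ``well-founded from below'' for the specialization preorder, and then play the ordinal indexing of $D$ off against the $\mathbb{N}$-indexing arising from a hypothetical strictly descending chain of closed subsets of $D$, via Ramsey's theorem, in order to convert a failure of noetherianity into an honest infinite discrete subspace. The other two implications are routine once the elementary facts recalled in the first paragraph are available.
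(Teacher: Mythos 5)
Your proof is correct, and the implications $(iii)\Rightarrow(ii)$ and $(ii)\Rightarrow(i)$ are essentially the paper's own arguments (noetherian induction to split the dense subset into irreducibles, then push closures forward; and the observation that an irreducible component of $\overline{X}$ can meet a discrete $X$ in at most one point). For the key implication $(i)\Rightarrow(iii)$ you take a genuinely different route. Both you and the paper first manufacture a dense subset $D$ of the closed set $C$ that is well-founded for the specialization preorder --- you by a greedy transfinite enumeration (pick $p_\alpha$ outside the closure of the earlier points), the paper by Hausdorff's theorem on well-founded cofinal subsets, obtained via Zorn's Lemma. The difference lies in how a hypothetical strictly descending chain $E_0\supsetneq E_1\supsetneq\cdots$ of closed subsets of $D$ is refuted. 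The paper introduces the notion of a \emph{separating} chain of closed sets, proves that an infinite independent set exists if and only if some induced closure admits an infinite separating chain, and then exploits the failure of separation at every tail of the chain to extract finite sets $F_k$ whose closures descend; well-foundedness of the finitely generated closed sets (Birkhoff's theorem, proved via K\H onig's Lemma) forces stabilization and a contradiction. You instead take difference points $q_n\in E_n\setminus E_{n+1}$, note that the specialization order on $\{q_n : n\in\N\}$ has no infinite ascending chain (the indices would descend) and no infinite descending chain (by well-foundedness of $D$), invoke the infinite Ramsey theorem to extract an infinite antichain, and verify directly that this antichain is independent, hence discrete --- contradicting $(i)$. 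Your version is shorter and more self-contained in the topological setting, at the cost of using Ramsey's theorem; the paper's detour through separating chains is heavier but applies to arbitrary (non-topological) closure systems and isolates a notion of independent interest connected to the cited work of Chakir and Pouzet. Both constructions of $D$, incidentally, require some form of choice, and both arguments are complete as written.
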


The equivalence between $(i)$ and $(ii)$ was proved by the first
author in 2019 \cite{goubault-larrecq}.  The resulting spaces were
called FAC spaces there, since they are a topological generalization
of posets with the \emph{f}inite \emph{a}ntichain \emph{c}ondition,
namely those that have no infinite antichain.  Hence, compared to
\cite{goubault-larrecq}, $(iii)$ is a new, equivalent definition of
FAC spaces, itself inspired from a well-known equivalent
characterization of posets with the finite antichain condition (see
Section~\ref{sec:posets}).  We will also mention a few other
equivalent conditions in Theorem~\ref{thm:haus:discr}, which, as we
will see, are connected to a nice min-max property, which we will
state in Proposition~\ref{prop:ET:top}.

We give a proof of the equivalence of $(iii)$ with $(i)$ and $(ii)$ in
Section~\ref{sec:proof}.  That also gives an alternative proof of the
equivalence between $(i)$ and $(ii)$, and will stress the importance
of the notion of infinite separating chain of closed sets, inspired
from work by Chakir and Pouzet \cite{chakir-pouzet,chakir}.  We make
additional remarks in Section~\ref{sec:remarks}, relating the result
to other characterizations of FAC spaces, to Noetherian spaces, to a
related result of Galvin, Milner and Pouzet in the larger context of
closure operators, to lattice properties, and finally to a min-max
result due to Erd\H os and Tarski \cite{erdos-tarski}, which will lead
us to the topological min-max theorem announced above.

\section{The proof}
\label{sec:proof}

We mimic the proof of a similar, fairly well-known result for posets,
which we will give in Section~\ref{sec:posets}. The significant part
is the implication $(i)\Rightarrow (iii)$.

We recall that a \emph{closure system} is a pair $(E, \varphi)$ where
$\varphi$ (the \emph{closure}) is a map from the power set of $E$ into
itself which is extensive, order-preserving and idempotent
\cite[Section~II.1]{Cohn:alg:1981}.  A closure is \emph{topological}
if and only if it commutes with finite unions.  The fixed points of a
topological closure form the lattice of closed sets of a topology, and
conversely, the usual closure operator of a topological space is a
topological closure.

Given a closure system $(E, \varphi)$, a subset $C\subseteq E$ is
\emph{closed} if $\varphi(C)=C$; it is \emph{independent} if
$x\not \in \varphi(C\setminus \{x\})$ for every $x\in C$; it is
\emph{generating} if $\varphi(C)= E$.  When $\varphi$ is a topological
closure, a set is independent if and only if it is discrete, and the
generating sets are called \emph{dense}.

The closure $\varphi_{\restriction E'}$ \emph{induced by $\varphi$} on
a subset $E'$ of $E$ is defined by
$\varphi_{\restriction E'}(X):= \varphi(X)\cap E'$ for every
$X\subseteq E'$.  \ifcard The \emph{dimension} of the closure system
is the least cardinal $\kappa$ for which there is a generating set of
cardinal $\kappa$.  \fi 

Although our results will only apply to topological closures, we use
the language of (general) closures so as to be able to relate our
results to a theorem of Galvin, Milner and Pouzet in
Section~\ref{sec:related-result}.

\begin{figure}
  \centering
  \begin{tabular}{cc}
    \multicolumn{2}{c}{\includegraphics[scale=0.15]{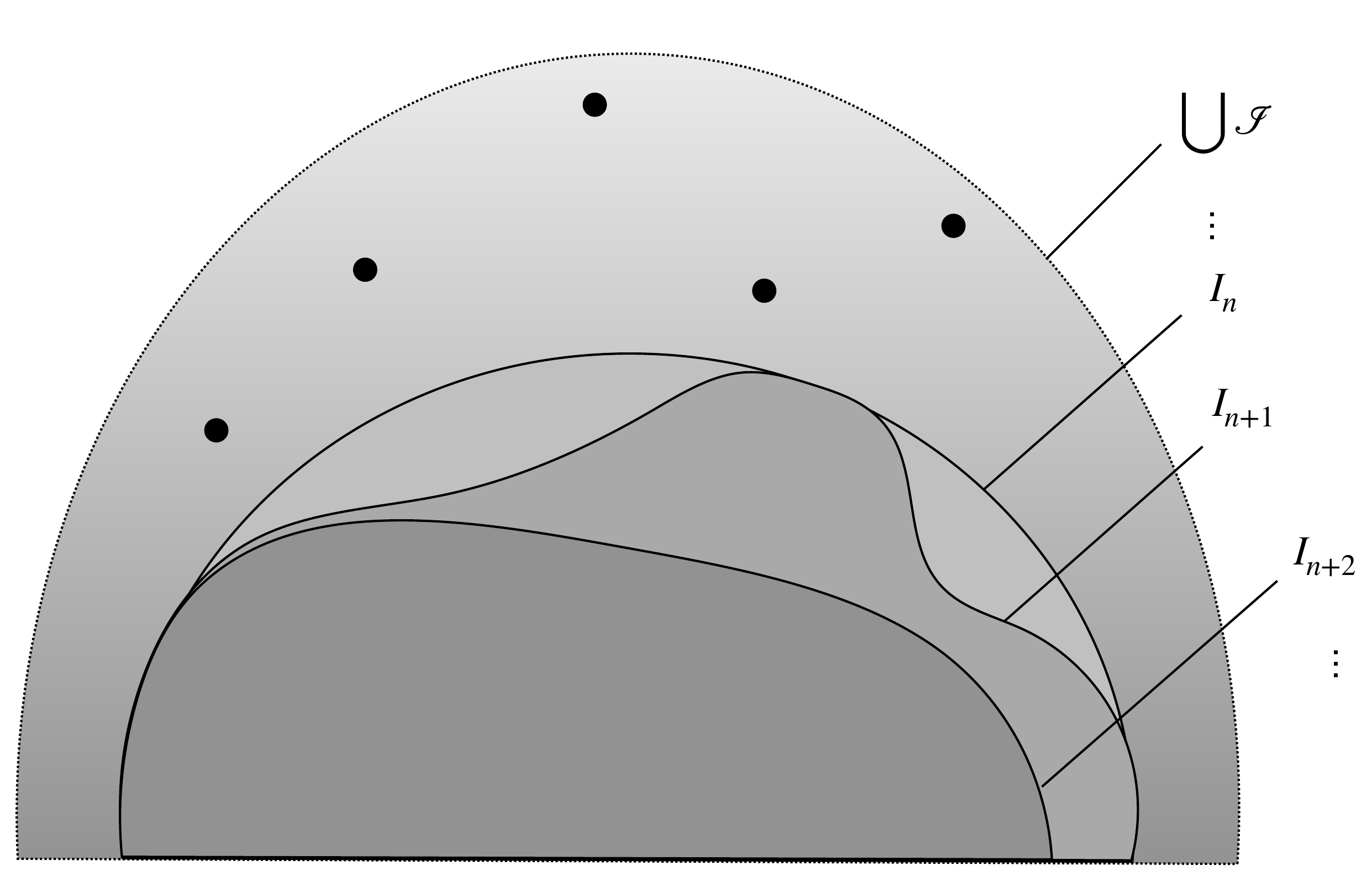}}
    \\
    \includegraphics[scale=0.15]{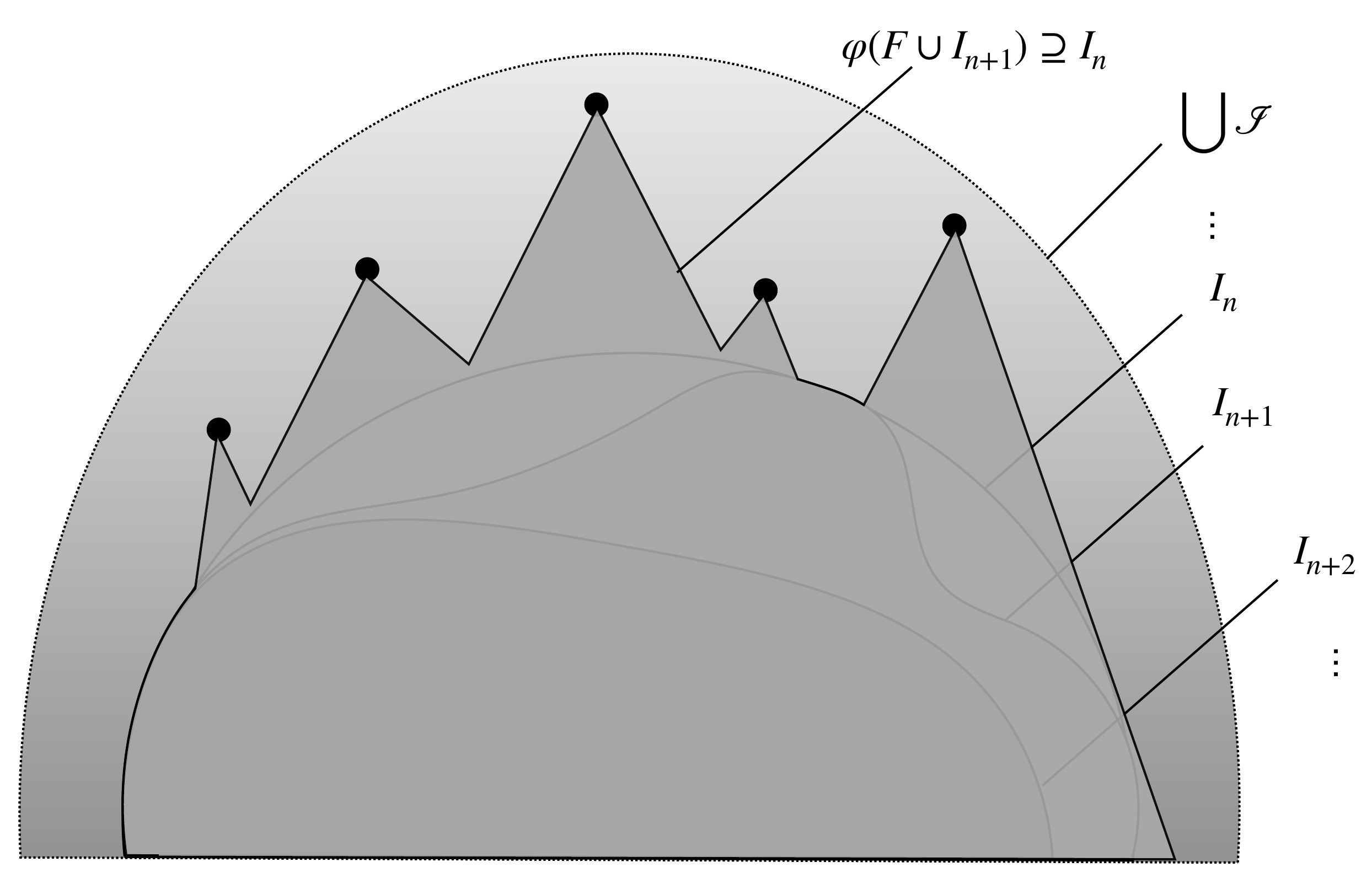}
    &
    \includegraphics[scale=0.15]{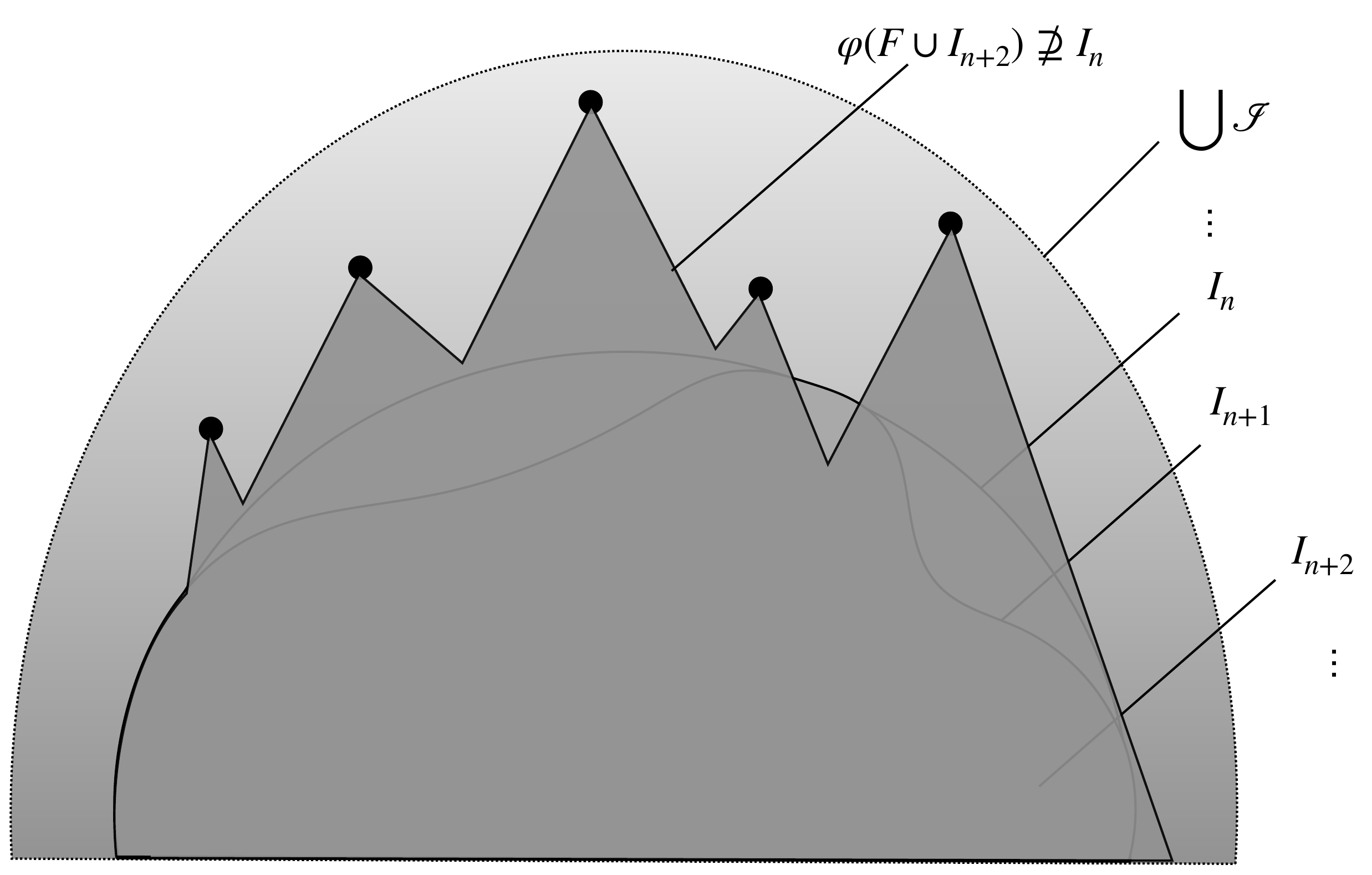}
  \end{tabular}
  \caption{A separating chain}
  \label{fig:chain}
\end{figure}

We will use the following notion and result, adapted from
\cite[Section 3, p.7]{chakir-pouzet}, see also \cite[p.25]{chakir}. A
non-empty 
chain $\mathcal I$ of closed sets of $(E, \varphi)$ is {\it
  separating} \index{separating} if for every
$I\in\mathcal I\setminus\{\bigcup\mathcal I\}$ and every finite set
$F \subseteq \bigcup\mathcal I\setminus I$, there is a set
$J\in\mathcal I$ such that $I\not\subseteq \varphi(F\cup J)$.  Note
that $J$ is necessarily included in $I$, and strictly so: otherwise,
since $\mathcal I$ is a chain, $I$ would be included in $J$, hence in
$\varphi (F \cup J)$.  In particular, a separating chain $\mathcal I$
cannot have a smallest element $I \neq \bigcup\mathcal I$; an
\emph{infinite} separating chain cannot have a smallest element at
all.  We illustrate the notion in Figure~\ref{fig:chain}.  Here we
assume that $\varphi$ is a topological closure operator, so that it
commutes with unions; $\mathcal I$ is a countable chain
$I_0 \supseteq I_1 \supseteq \cdots \supseteq I_n \supseteq \cdots$,
we consider $I := I_n$, and $F$ is the finite set of bullets in the
top tier of the figure.  Below left, we show
$\varphi (F \cup I_{n+1})$, which is the union of $I_{n+1}$ with the
closures of each of the points in $F$ (each closure of a point being
shown as a triangular region that extends below the point), and
$\varphi (F \cup I_{n+1})$ contains $I_n$; but
$\varphi (F \cup I_{n+2})$, shown below right in the figure, does not,
and therefore we can take $J := I_{n+2}$.

The following holds for every closure system, not necessarily
topological.

\begin{lemma}
  \label{independent}
  A closure system $(E, \varphi)$ contains an infinite independent set
  if and only if $E$ contains a subset $E'$ that contains an infinite
  separating chain of closed sets with respect to the induced closure.
\end{lemma}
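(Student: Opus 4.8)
The plan is to establish the two implications by explicit constructions, using only two elementary properties of a closure $\varphi$: (a) $\varphi(X\cup\varphi(Y))=\varphi(X\cup Y)$ for all $X,Y\subseteq E$; and (b) for $X\subseteq E'$, the set $X$ is independent for the induced closure $\psi:=\varphi_{\restriction E'}$ if and only if it is independent for $\varphi$ (because $\psi(X\setminus\{x\})=\varphi(X\setminus\{x\})\cap E'$ and $x\in E'$ whenever $x\in X$). In particular, by (b), to produce an infinite independent set of $(E,\varphi)$ it suffices to produce an infinite $\psi$-independent subset of some $E'\subseteq E$.

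\emph{If there is an infinite independent set, there is a separating chain.} Let $D=\{x_n:n\in\N\}$ be an infinite independent set of $(E,\varphi)$, put $E':=D$, $\psi:=\varphi_{\restriction E'}$, and $I_n:=\psi(\{x_k:k\ge n\})$. Since $\{x_k:k\ge n\}\subseteq\{x_k:k\ne j\}$ precisely when $j<n$, independence gives $I_n=\{x_j:j\ge n\}$; thus $D\setminus I_n=\{x_0,\dots,x_{n-1}\}$, and $\mathcal I:=\{I_n:n\in\N\}$ is a strictly decreasing, hence infinite, chain of $\psi$-closed sets with $\bigcup\mathcal I=I_0=E'$. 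It remains to check that $\mathcal I$ is separating: given $I=I_n$ with $n\ge1$ and a finite $F\subseteq D\setminus I_n=\{x_0,\dots,x_{n-1}\}$, take $J:=I_{n+1}\subseteq I_n$; by (a), $\psi(F\cup J)\subseteq\varphi(F\cup\{x_k:k\ge n+1\})\subseteq\varphi(\{x_k:k\ne n\})$, which does not contain $x_n$, so $x_n\in I_n\setminus\psi(F\cup J)$ and $I_n\not\subseteq\psi(F\cup J)$.

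\emph{If there is a separating chain, there is an infinite independent set.} Suppose $E'\subseteq E$ is such that $\psi:=\varphi_{\restriction E'}$ has an infinite separating chain $\mathcal I$ of $\psi$-closed sets; by the reduction above it suffices to build an infinite $\psi$-independent subset of $E'$, so I rename $(E',\psi)$ as $(E,\varphi)$. Let $U:=\bigcup\mathcal I$. I would construct recursively $x_n\in U$ and $J_n\in\mathcal I$ with $J_0\supsetneq J_1\supsetneq\cdots$, $x_n\in J_n\setminus J_{n+1}$, and $x_n\notin\varphi(\{x_0,\dots,x_{n-1}\}\cup J_{n+1})$. At stage $n$: one has $J_n\ne U$ (pick $J_0\in\mathcal I\setminus\{U\}$, non-empty since $\mathcal I$ is infinite; and $J_n\subsetneq J_{n-1}\subseteq U$ for $n\ge1$), and $F:=\{x_0,\dots,x_{n-1}\}\subseteq U\setminus J_n$ since $x_k\in J_k\subseteq U$ and $x_k\notin J_{k+1}\supseteq J_n$ for $k<n$; so the separating property applied to $I:=J_n$ and this $F$ gives $J_{n+1}\in\mathcal I$ with $J_{n+1}\subsetneq J_n$ and $J_n\not\subseteq\varphi(F\cup J_{n+1})$, and we choose $x_n\in J_n\setminus\varphi(F\cup J_{n+1})$. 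Finally, $x_m\in J_m\subseteq J_{n+1}$ for $m>n$ while $x_n\notin J_{n+1}$, so the $x_n$ are pairwise distinct and $\{x_k:k\ne n\}\subseteq\{x_0,\dots,x_{n-1}\}\cup J_{n+1}$; applying $\varphi$ and using the defining property of $x_n$ shows $x_n\notin\varphi(\{x_k:k\ne n\})$, i.e.\ $\{x_n:n\in\N\}$ is an infinite independent set.

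I expect the only real difficulty to be the bookkeeping in the second implication: one must choose the recursion invariants so that the separating property is usable at every stage — that $J_n$ never equals the top $U$, and that the finite set of points already chosen lies inside $U\setminus J_n$ — and then combine this with the routine identities (a) and (b) above. No conceptual input beyond the definition of a separating chain is required.
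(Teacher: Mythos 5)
Your proof is correct and follows essentially the same route as the paper's: for the forward direction you take $E'$ to be the independent set itself and use the chain of induced closures of its tails, and for the converse you run the same recursion, choosing at each stage a witness point outside $\varphi(F\cup J)$ together with a strictly smaller member of the chain. The only differences are cosmetic (an index shift in the recursion and your explicit observation that $I_n=\{x_j: j\ge n\}$).
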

\begin{proof}
  Any infinite independent set contains a countably infinite
  independent subset, so we may as well assume a given infinite
  independent set $X$ of the form $\{x_{n}: n<\omega\}$, where
  $x_m \neq x_n$ for all $m \neq n$. Set $E':=X$. Then the chain
  $\mathcal I=\{I_{n}: n<\omega\}$, where
  $I_{n}:=\varphi_{\restriction E'}(X\setminus\{x_{i}: i< n\})$, is
  separating in $E'$.  Indeed, first
  $\bigcup\mathcal I = I_0 = E' = X$, next every
  $I \in \mathcal I \setminus \{\bigcup\mathcal I\}$ is an $I_n$ for
  some $n \geq 1$. For every finite set $F$ of points of
  $I_0 \setminus I_n$, define $J$ as $I_{n+1}$.  Since $x_n$ is
  different from every $x_i$, $i < n$, $x_n$ is in
  $X\setminus\{x_{i}: i< n\}$ hence in $I_n$.  It follows that $x_n$
  is not in $F$.  It is not in $J$ either, because
  $J \subseteq \varphi (X \setminus \{x_i : i < n+1\}) \subseteq
  \varphi (X \setminus \{x_n\})$, and $X$ is independent.  Therefore
  $x_n$ is not in $F \cup J$.  We rewrite that as
  $F \cup J \subseteq X \setminus \{x_n\}$, and conclude that
  $\varphi (F \cup J) \subseteq \varphi (X \setminus \{x_n\})$.  Since
  $X$ is independent again, $x_n$ cannot be in $\varphi (F \cup J)$.
  However, $x_n$ is in $I_n$, so
  $I_n \not\subseteq \varphi (F \cup J)$.


  Conversely, let $E'$ be a subset of $E$ such that the induced
  closure $\varphi':= \varphi_{\restriction E'}$ on $E'$ contains an
  infinite separating chain $\mathcal I$ of closed sets. We construct
  an infinite independent subset for the induced closure $\varphi'$,
  and therefore also for the closure $\varphi$. To this end, we will
  define inductively an infinite sequence
  $x_{0}, I_{0}, \ldots, x_{n},I_{n}, \ldots$ such that
  $I_{0}\in\mathcal I\setminus\{\bigcup\mathcal I\}, x_{0}\in
  \bigcup\mathcal I \setminus I_{0}$ and such that, for every
  $n \geq 1$:
  \begin{enumerate}
  \item[$(a_n)$] $I_{n}\in\mathcal I$;
  \item[$(b_n)$] $I_{n}\subset I_{n-1}$;
  \item[$(c_n)$] $x_{n}\in I_{n-1}\setminus \varphi'(\{x_{0},  \ldots, 
    x_{n-1}\}\cup I_{n})$.
  \end{enumerate}
  Since $\mathcal I$ is infinite,
  $\mathcal I\setminus\{\bigcup\mathcal I\}\not =\emptyset$.  We
  choose $I_{0}\in\mathcal I\setminus\{\bigcup\mathcal I\}$ and
  $x_{0}\in\bigcup\mathcal I\setminus I_{0}$ arbitrarily.  Now let
  $n\geq 1$.  Let us assume that $x_{k}, I_{k}$ are defined and
  satisfy $(a_{k})$, $(b_{k})$, $(c_{k})$ for every $k\leq n-1$, and
  let us define $I:=I_{n-1}$ and $F:=\{x_{0}, \ldots,
  x_{n-1}\}$. Since $I\in\mathcal I$ and $F$ is a finite subset of
  $\bigcup\mathcal I\setminus I$, there is some $J\in\mathcal I$ such
  that $I\not\subseteq \varphi' (F\cup J)$.  The set $J$ is a proper
  subset of $I$: otherwise, since $\mathcal I$ is a chain, we would
  have $I \subseteq J$, hence
  $I \subseteq J \subseteq F \cup J \subseteq \varphi' (F \cup J)$.
  We pick $z\in I\setminus \varphi'(F \cup J)$, and we set $x_{n}:=z$,
  $I_{n}:=J$.
  
  It remains to check that the set $X:=\{x_{n}: n<\omega\}$ is
  independent. For every $x \in X$, say $x=x_n$, we know that $x_n$ is
  in $I_{n-1}$ and not in the closed set
  $C := \varphi'(\{x_{0}, \ldots, x_{n-1}\}\cup I_{n})$, by $(c_{n})$.
  The set $C$ contains $x_0$, \ldots, $x_{n-1}$. For every $k > n+1$,
  $I_{k-1} \subset I_n$ by $(b_{k-1})$, \ldots, $(b_{n+1})$, and
  $x_k \in I_{k-1}$ by $(c_k)$, so $I_n$ contains every $x_k$ with
  $k>n+1$.  It also contains $x_{n+1}$, and since $I_n \subseteq C$,
  $C$ contains every $x_k$ with $k \geq n+1$.  It follows that $C$
  contains every element of $X$ except $x=x_n$. Therefore
  $C \supseteq \varphi' (X \setminus \{x\})$, from which we conclude
  that $x$, not being in $C$, is not in $\varphi' (X \setminus \{x\})$
  either.
\end{proof}

\begin{remark}
  \label{remark1}
  Assuming $\varphi$ topological, we can take $E':= E$ in
  Lemma~\ref{independent}.  Indeed, let $X$ be an infinite independent
  subset of the form $\{x_{n}: n<\omega\}$, where the points $x_n$ are
  pairwise distinct. Let $I_{n}:=\varphi(X\setminus\{x_{i}: i<
  n\})$. The chain $\mathcal I=\{I_{n}: n<\omega\}$ is separating, as
  we now check.  First $\bigcup\mathcal I = I_0 = \varphi (X)$. Every
  $I \in \mathcal I \setminus \{\bigcup\mathcal I\}$ is an $I_n$ for
  some $n \geq 1$. For every finite set $F$ of points of
  $\varphi (X) \setminus I_n$, we define $J$ as $I_{n+1}$ and we check
  that $I_{n} \not\subseteq \varphi (F \cup J)$ by showing that $x_n$,
  which is in $I_n$, is not in $\varphi (F \cup I_{n+1})$.  Since
  $\varphi$ is topological, it suffices to show that $x_n$ is neither
  in $\varphi (F)$ nor in $\varphi (I_{n+1}) = I_{n+1}$.  The
  latter---that $x_n$ is not in $I_{n+1}$---is clear.  As for the
  former, we note that
  $I_n \cup \varphi (X \setminus \{x_n\}) = \varphi ((X \setminus
  \{x_i : i < n\}) \cup (X \setminus \{x_n\})) = \varphi (X)$, using
  the fact that $\varphi$ is topological.  That implies
  $\varphi (X) \setminus I_n \subseteq \varphi (X \setminus \{x_n\})$,
  so $F \subseteq \varphi (X \setminus \{x_n\})$, whence
  $\varphi (F) \subseteq \varphi (X \setminus \{x_n\})$.  If $x_n$
  were in $\varphi (F)$, it would then be in
  $\varphi (X \setminus \{x_n\})$, and that is impossible since $X$ is
  independent.
\end{remark}

\begin{remark}
  \label{remark2}
  In general, namely if $\varphi$ is not topological, we cannot take
  $E:=E'$.

  Let us consider $E := \pow (\N)$, and let us define $\varphi$ by
  $\varphi (\mathcal A) := \pow (\bigcup \mathcal A)$ for every
  $\mathcal A \in \pow (E)$.  The closed subsets of $E$ are exactly
  the sets of the form $\pow (A)$ with $A \subseteq \N$.  There is an
  infinite independent set, say $X := \{\{n\} \mid n \in \N\}$.
  However, no subset of $E$ has any separating chain of closed sets
  $\mathcal I$.  Indeed, let us assume that one existed, and let us
  pick any $I \in \mathcal I \setminus \{\bigcup \mathcal I\}$.  Since
  $I$ is closed, $I = \pow (A)$ for some $A \subseteq \N$.  We take
  $F := \{A\}$: for every $J \in \mathcal I$,
  $\varphi (F \cup J) \supseteq \varphi (F) = \pow (A) = I$, so
  $\mathcal I$ cannot be separating.
\end{remark}

In a quasi-ordered set $E$, for every $A \subseteq E$, we write
$\dc A$ for $\{x \in E : \exists y \in A, x \leq y\}$. A subset $A$ is
\emph{cofinal} in $E$ if and only if $E = \dc A$ \cite[Chapter~2, 5.1,
p.44]{fraisse}. An \emph{initial segment} is a subset $I$ of $E$ such
that $I = \dc I$. The \emph{finitely generated} initial segments are
the sets of the form $\dc A$, $A$ finite. We write $\dc x$ for
$\dc \{x\}$.

Every closure system $E$ is quasi-ordered by $x\leq y$ if and only if
$x\in \varphi(\{y\})$. For every $x \in X$, $\dc x = \varphi (\{x\})$.
Every closed set is an initial segment. If $\varphi$ is topological,
then every finitely generated initial segment $\dc A$ is equal to
$\varphi (A)$, hence is closed.


\ifcard
\begin{lemma}
  \label{cofinality1}
  Let $(E, \varphi)$ be a closure system and $\kappa$ be the dimension
  of $(E, \varphi)$. Then $E$ contains a generating subset $D$ of
  cardinality $\kappa$ on which the collection of finitely generated
  initial segments of the quasi-ordered set
  $(D, \leq_{\restriction D})$ is well-founded.
\end{lemma}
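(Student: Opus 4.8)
The plan is to build $D$ by a transfinite recursion driven by a generating set of minimal size, arranged so that well-foundedness of $(D,\leq_{\restriction D})$ falls out of the ordinal bookkeeping for free, and then to combine this with the purely order-theoretic fact that every well-founded poset has a well-founded family of finitely generated initial segments. Concretely, fix a generating set $G=\{g_\xi:\xi<\kappa\}$ enumerated without repetitions. Define by recursion on $\alpha<\kappa$ the closed set $F_\alpha:=\varphi(\{d_\beta:\beta<\alpha,\ d_\beta\ \text{defined}\})$, and then put $d_\alpha:=g_\alpha$ if $g_\alpha\notin F_\alpha$ and leave $d_\alpha$ undefined otherwise. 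Let $D$ be the set of defined $d_\alpha$'s, so $D\subseteq G$. A routine induction on $\xi$ shows $F_\xi\subseteq\varphi(D)$, hence $g_\xi\in\varphi(D)$ in both cases of the recursion; thus $\varphi(D)\supseteq\varphi(G)=E$, so $D$ is generating, and since $D\subseteq G$ while $\kappa$ is the dimension, $|D|=\kappa$.

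Next I would verify that $(D,\leq_{\restriction D})$ is well-founded. Note that the $F_\gamma$ increase with $\gamma$, and that for every defined $d_\alpha$ we have $d_\alpha\notin F_\alpha$ by construction. Suppose $d_{\alpha_0}>d_{\alpha_1}>\cdots$ were an infinite strictly descending chain in $D$ (so the $\alpha_k$ are pairwise distinct). From $d_{\alpha_{k+1}}\leq d_{\alpha_k}$ we get $d_{\alpha_{k+1}}\in\varphi(\{d_{\alpha_k}\})\subseteq F_{\alpha_k+1}$, whereas $d_{\alpha_{k+1}}\notin F_{\alpha_{k+1}}$; since the $F_\gamma$ increase this forces $\alpha_{k+1}<\alpha_k+1$, i.e.\ $\alpha_{k+1}<\alpha_k$, and an infinite strictly decreasing sequence of ordinals $\alpha_0>\alpha_1>\cdots$ is absurd. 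The identical computation applied to two distinct members of $D$ lying in the same $\leq$-class shows there are none, so $(D,\leq_{\restriction D})$ is in fact a partial order.

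It then remains to pass from well-foundedness of the poset $(D,\leq_{\restriction D})$ to well-foundedness of its finitely generated initial segments, using: in a well-founded poset $Q$ with ordinal rank function $\rho$, the finitely generated initial segments, ordered by inclusion, form a well-founded poset. Indeed, each such initial segment $I$ equals $\dc M$ (down-closure taken in $Q$) for the finite antichain $M$ of its maximal elements; assign to $I$ the finite multiset $w(I):=\{\rho(a):a\in M\}$ of ordinals. If $I\supsetneq I'$ with maximal-element antichains $M,M'$, then, using $M=\max I$, one checks that $M\cap\dc M'=M\cap M'$, that $M\setminus M'\neq\emptyset$, and that every $b\in M'\setminus M$ is strictly below some element of $M\setminus M'$; hence $w(I')$ is obtained from $w(I)$ by deleting a nonempty sub-multiset and inserting ordinals each strictly below a deleted one, so $w(I')<w(I)$ in the Dershowitz--Manna multiset order over the (well-ordered) ordinals, which is well-founded. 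Applying this to $Q=(D,\leq_{\restriction D})$ gives the conclusion of the lemma.

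I expect the main obstacle to be spotting the construction itself: the key realisation is that a recursion driven by a \emph{minimal} generating set, with the single rule ``skip $g_\alpha$ when it already lies in $F_\alpha$'', simultaneously pins $|D|=\kappa$ and makes $(D,\leq_{\restriction D})$ well-founded, the latter being nothing more than a shadow of the well-ordering of $\kappa$. The only genuinely separate ingredient is the last paragraph's multiset-rank argument, which is exactly the classical poset statement underlying the implication $(i)\Rightarrow(iii)$ that the paper is lifting to the topological setting.
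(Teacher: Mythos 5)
Your proof is correct, and its first half is essentially the paper's argument: the paper also starts from a generating set of minimal cardinality $\kappa$ and performs a greedy transfinite selection so that each chosen point lies outside the closure of the previously chosen ones, which yields $x_\beta\not\leq x_\alpha$ for all $\alpha<\beta$ and hence well-foundedness (the paper re-indexes via a map $\theta$ that always picks the least not-yet-generated element, while you simply skip the already-generated ones; your variant has the small advantage of not needing the observation that a set of size $<\kappa$ cannot generate, which the paper uses to keep $\theta$ total). The genuine difference is in the second half, the passage from well-foundedness of $(D,\leq_{\restriction D})$ to well-foundedness of its finitely generated initial segments (Birkhoff's lemma): the paper proves this by assembling descending chains into a finitely branching infinite tree and applying K\H onig's Lemma, whereas you encode each finitely generated initial segment by the multiset of ranks of its maximal elements and invoke well-foundedness of the Dershowitz--Manna multiset extension. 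Your three auxiliary facts about $M$ and $M'$ all check out (and your prior verification that $(D,\leq_{\restriction D})$ is antisymmetric legitimately reduces the quasi-order case to the poset case). The multiset route works uniformly for descending sequences of any ordinal length, at the mild cost of needing a rank function, i.e.\ well-foundedness in the ``every nonempty subset has a minimal element'' sense rather than merely ``no infinite strictly descending sequence''; since the paper uses Zorn's Lemma freely elsewhere, that distinction is immaterial here.
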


According to a result of Birkhoff (1937), the poset of finitely
generated initial segments of a poset is well-founded whenever the
poset is well-founded \cite[Theorem2, p.182]{birkhoff}. This property
holds for initial segments of a quasi-ordered set too, provided we
define a quasi-ordered set as \emph{well-founded} if it has no
infinite strictly descending sequence
$x_0 > x_1 > \cdots > x_n > \cdots$, where $x < y$ if and only if
$x \leq y$ and $y \not\leq x$. Thus, in order to prove
Lemma~\ref{cofinality1} it suffices to prove that $E$ contains a
generating subset on which the quasi-order $\leq$ is well-founded.
This fact follows readily from the following lemma, which was already
known to hold for the closure of initial segments of a poset
\cite{milner-pouzet82}.

\begin{lemma}
  \label{cofinality2}
  Given a closure system $(E, \varphi)$ of dimension $\kappa$, there
  is an enumeration $(x_{\alpha})_{\alpha<\kappa}$ of a generating
  subset $D$ of $E$ of cardinality $\kappa$ such that
  $x_{\beta} \not \leq x_{\alpha}$ for all $\alpha <\beta < \kappa$.
\end{lemma}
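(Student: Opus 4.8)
The plan is to build the enumeration greedily by transfinite recursion, consuming a fixed generating set of size $\kappa$ while never allowing a newly chosen point to lie below a previously chosen one.

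First I would fix a generating subset $G=\{g_{\beta}:\beta<\kappa\}$ of $E$ of cardinality $\kappa$; such a $G$ exists because $\kappa$ is the dimension of $(E,\varphi)$. Then I define a (possibly transfinite) sequence of points $x_{\alpha}$ by recursion: assuming $x_{\alpha'}$ has been chosen for all $\alpha'<\alpha$, stop if $\varphi(\{x_{\alpha'}:\alpha'<\alpha\})=E$; otherwise let $\beta(\alpha)$ be the least $\beta<\kappa$ with $g_{\beta}\notin\bigcup_{\alpha'<\alpha}\dc x_{\alpha'}$ and set $x_{\alpha}:=g_{\beta(\alpha)}$.

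I would then verify three things. First, the recursion step is legitimate: if no admissible $\beta$ existed, then $G\subseteq\bigcup_{\alpha'<\alpha}\dc x_{\alpha'}=\bigcup_{\alpha'<\alpha}\varphi(\{x_{\alpha'}\})$, and since the right-hand side is contained in $\varphi(\{x_{\alpha'}:\alpha'<\alpha\})$, applying $\varphi$ and using idempotence together with $\varphi(G)=E$ would give $\varphi(\{x_{\alpha'}:\alpha'<\alpha\})=E$, contradicting that the construction did not stop at stage $\alpha$. Second, by the very choice of $x_{\alpha}$ we have $x_{\alpha}\notin\dc x_{\alpha'}$, i.e.\ $x_{\alpha}\not\leq x_{\alpha'}$, for every $\alpha'<\alpha$; in particular the $x_{\alpha}$ are pairwise distinct. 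Third, the ordinal function $\alpha\mapsto\beta(\alpha)$ is strictly increasing: for $\alpha'<\alpha$ the point $x_{\alpha'}=g_{\beta(\alpha')}$ lies in $\bigcup_{\alpha''<\alpha}\dc x_{\alpha''}$, so $\beta(\alpha')$ and all indices below it are excluded at stage $\alpha$, forcing $\beta(\alpha)>\beta(\alpha')$.

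Finally I would conclude. A strictly increasing ordinal-valued map defined on an ordinal $\lambda$ with all values $<\kappa$ forces $\lambda\leq\kappa$; hence the recursion halts at some stage $\lambda\leq\kappa$ with $\varphi(\{x_{\alpha}:\alpha<\lambda\})=E$. If $\lambda<\kappa$, then $\{x_{\alpha}:\alpha<\lambda\}$ would be a generating set of cardinality $|\lambda|<\kappa$, contradicting the minimality of the dimension; so $\lambda=\kappa$. Then $D:=\{x_{\alpha}:\alpha<\kappa\}$ is generating, has cardinality exactly $\kappa$ (the $x_{\alpha}$ being distinct), and its enumeration $(x_{\alpha})_{\alpha<\kappa}$ satisfies $x_{\beta}\not\leq x_{\alpha}$ for all $\alpha<\beta<\kappa$, as required. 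The one delicate point is controlling the length of the recursion: halting before $\kappa$ is ruled out by minimality of the dimension, while halting after $\kappa$ (or never) is ruled out by the strict monotonicity of $\beta(\cdot)$ — which is exactly why one picks the \emph{least} admissible index at each step rather than an arbitrary one.
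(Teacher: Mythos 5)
Your proof is correct and follows essentially the same strategy as the paper's: a greedy transfinite recursion over a fixed enumerated generating set of size $\kappa$, choosing at each stage the least-indexed element not yet dominated by the previously chosen points, with that leastness used to show the process exhausts a generating set in exactly $\kappa$ steps. The only (harmless) variations are that you exclude points of $\bigcup_{\alpha'<\alpha}\dc x_{\alpha'}$ rather than of $\varphi(\{x_{\alpha'}:\alpha'<\alpha\})$ and add an explicit halting condition, whereas the paper runs the recursion for all $\beta<\kappa$ unconditionally and verifies generation afterwards.
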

\begin{proof} Let $E'$ be a generating subset of $E$ of cardinality
  $\kappa$. Let $(y_{\alpha})_{\alpha<\kappa}$ be an enumeration of
  $E'$. Define a sequence $(\theta (\alpha))_{\alpha < \kappa}$ as
  follows. Let $\beta<\kappa$ and suppose $\theta(\alpha)$ is defined
  for every $\alpha <\beta$. Since $\beta < \kappa$,
  $\vert \{y_{\theta(\alpha)}: {\alpha<\beta}\}\vert < \kappa$, hence
  $\{y_{\theta(\alpha)}: \alpha<\beta\}$ is not a generating set.  In particular, some
  $y_{\gamma}$ is not in $\varphi (\{y_{\theta(\alpha)}:
  {\alpha<\beta}\})$. Let $\theta (\beta)$ be the least $\gamma$ with
  this property. For example, $\theta(0)=0$. Set
  $x_{\alpha}:= y_{\theta (\alpha)}$ for every $\alpha<\kappa$.

  For all $\alpha < \beta < \kappa$, if $x_\beta \leq x_\alpha$ then
  we would have $y_{\theta (\beta)} \leq y_{\theta (\alpha)}$, namely
  $y_{\theta(\beta)} \in \varphi (\{y_{\theta (\alpha)}\})$, which is
  impossible since $y_{\theta (\beta)}$ is not in the larger set
  $\varphi (\{y_{\theta(\alpha)}:
  {\alpha<\beta}\})$. Hence $x_\beta \not\leq x_\alpha$.

  Furthermore, the set $D:= \{x_{\alpha}: \alpha<\kappa\}$ is a
  generating set. For that, it suffices to observe that
  $E' \subseteq \varphi (D)$, since that inclusion implies
  $E= \varphi(E') \subseteq \varphi(\varphi (D))= \varphi(D)$. If
  $E' \not \subseteq \varphi (D)$, then some $y_\gamma$ is not in
  $\varphi (D)$, $\gamma < \kappa$.  For every $\beta < \kappa$, since
  $y_\gamma$ is not in $\varphi (D)$, it is not in the smaller set
  $\varphi (\{y_{\theta(\alpha)}:{\alpha<\beta}\})$. By the way we
  have constructed $\theta (\beta)$ as a minimal ordinal,
  $\theta (\beta) \leq \gamma$. This holds for every $\beta < \kappa$.
  Since the ordinals $\theta (\beta)$, $\beta < \kappa$, are pairwise
  distinct, there are at least $\kappa$ distinct ordinals below
  $\gamma$. That is impossible since $\gamma < \kappa$.
\end{proof}
\else

We say that a quasi-ordered set is \emph{well-founded} if and only if
it has no infinite strictly descending sequence
$x_0 > x_1 > \cdots > x_n > \cdots$, where $x < y$ if and only if
$x \leq y$ and $y \not\leq x$.  That extends the same notion on
posets.

We will use the following lemma in order to prove the implication
$(i) \Rightarrow (iii)$.

\begin{lemma}
  \label{cofinality1}
  If $(E, \varphi)$ is a closure system then $E$ contains a generating
  subset $D$ such that the collection of finitely generated initial
  segments $\mathbf I_{<\omega}(D)$ of the quasi-ordered set
  $(D, \leq_{\restriction D})$ is well-founded under inclusion.
\end{lemma}
\begin{proof}
  According to a result of Birkhoff, 
  the poset $\mathbf I_{<\omega}(P)$ of finitely generated initial
  segments of a poset $P$ is well-founded if $P$ is well-founded
  \cite[Theorem~2, p.182]{birkhoff}. This property holds for initial
  segments of a quasi-ordered set too, since initial segments of a
  quasi-ordered set are inverse images of initial segments of the
  order quotient. For the reader's convenience, we give a proof of
  Birkhoff's result. Let
  $\dc A_1 \supset \dc A_2 \supset \cdots \supset \dc A_n \supset$ be
  an infinite descending sequence where each $A_i$ is finite. We may
  assume that each $A_i$ is an antichain. We construct a tree $T$
  whose vertices are finite chains $\{x_1, x_2, \cdots, x_n\}$ where
  each $x_i \in A_i$ and $x_1 \geq x_2 \geq \cdots \geq x_n$. The
  cardinality of such a set is $n$, or some lower number (if some
  element appears several times), and is the \emph{depth} of the
  vertex in $T$. The unique parent of a depth $n$ set, $n \geq 1$, is
  obtained by removing its least element ($x_n$, but also $x_{n-1}$ if
  that happens to be equal to $x_n$, and so on). The empty chain is
  the root. Every element of each $A_n$ is the least element of at
  least one such chain, of depth at most $n$. Hence every set
  $\dc A_n$ appears as the initial segment generated by a finite set
  of vertices of $T$. Since there are infinitely many sets $\dc A_n$,
  the tree $T$ is infinite. Since $T$ is finitely branching, by K\H
  onig's Lemma \cite{Konig:lemma} it has an infinite branch, and that
  is an infinite descending sequence of elements: contradiction.

  Thus, in order to prove the lemma, it suffices to prove that $E$
  contains a generating subset on which the quasi-order $\leq$ is well-founded.

  A result due to Hausdorff (see \cite[Chapter~2, p.57]{fraisse})
  states that every poset contains a well-founded cofinal subset. That
  is also valid for quasi-ordered sets such as $E$.  Indeed, let us
  consider the family $\mathcal W$ of all well-founded subsets of the
  set $E$, and let us order it by prefix: $A \sqsubseteq B$ if and
  only if $B \cap \dc A = A$.
  By Zorn's Lemma, it has a maximal element $A$. If $A$ were not
  cofinal, there would be a point $x$ that is not in $\dc A$. Then
  $B := A \cup \{x\}$ would be a strictly larger well-founded subset of
  $E$, contradicting maximality.

  Hence let $D$ be a well-founded cofinal subset of $E$. For every
  $x \in E$, there is a point $y \in D$ such that $x \leq y$. In other
  words, $x$ is in $\varphi (\{y\})$, hence in the larger set
  $\varphi (D)$. Therefore $\varphi (D) = E$ and $D$ is generating.
\end{proof}

\subsection*{Proof of the implication $(i)\Rightarrow (iii)$.}
\label{sec:proof-1}

%
Let $\varphi$ be the closure associated with the collection of closed
sets $\mathcal F$, and let us remember that it is topological. Let $C$
be a closed set. We define a dense subset of $C$ on which the induced
topology is Noetherian.
This will be $D$, as given by Lemma \ref{cofinality1}.  We note that $D$ is
dense in $C$, begin a generating set.

Let $\varphi_{\restriction D}$ be the closure induced on $D$, namely
$\varphi_{\restriction D}(X) := \varphi(X)\cap D$ for every
$X \subseteq D$. This is also a topological closure, and we claim that
it is Noetherian.

By $(i)$, $E$ contains no infinite discrete set, so $D$ does not
contain any infinite discrete set either.  Let us imagine that $D$
contained an infinite strictly descending sequence
$I_0 \supset I_1 \supset \cdots \supset I_n \supset \cdots$ of closed
subsets. By Lemma~\ref{independent},
that chain must fail to be separating: there must be an index
$m_1 \geq 1$ and a finite set $F_1$ of points of
$I_0 \setminus I_{m_1}$ such that, for every $n < \omega$ (in
particular, for every $n > m_1$),
$I_{m_1} \subseteq \varphi_{\restriction D} (F_1 \cup I_n) =
\varphi_{\restriction D} (F_1) \cup I_n$. The last equality is because
$\varphi_{\restriction D}$ is topological, and $I_n$ is closed. Hence
$m_1 \geq 1$, $F_1 \subseteq I_0 \setminus I_{m_1}$, and for every
$n > m_1$,
$I_{m_1} \setminus I_n \subseteq \varphi_{\restriction D} (F_1)$. We
do the same with the infinite subsequence starting at $I_{m_1}$: there
is an index $m_2 \geq m_1+1$ and a finite set
$F_2 \subseteq I_{m_1} \setminus I_{m_2}$ such that for every
$n > m_2$,
$I_{m_2} \setminus I_n \subseteq \varphi_{\restriction D} (F_2)$.
Proceeding this way, we obtain indices $m_{k+1} \geq m_k+1$ and finite
sets $F_{k+1} \subseteq I_{m_k} \setminus I_{m_{k+1}}$ such that for
every $n > m_{k+1}$,
$I_{m_{k+1}} \setminus I_n \subseteq \varphi_{\restriction D}
(F_{k+1})$, for every $k \geq 1$.

Since
$F_{k+1} \subseteq I_{m_k} \setminus I_{m_{k+1}} \subseteq
\varphi_{\restriction D} (F_k)$, we have
$\varphi_{\restriction D} (F_{k+1}) \subseteq \varphi_{\restriction D}
(F_k)$, for every $k \geq 1$. It follows that the sequence
${(\varphi_{\restriction D} (F_k))}_{k \geq 1}$ is descending. Since
$D$ was obtained from Lemma \ref{cofinality1}, that sequence must be
finite.  Let us pick $k \geq 2$ such that
$\varphi_{\restriction D} (F_k) = \varphi_{\restriction D} (F_{k+1})$.
The set $F_k$ cannot be empty, since $\varphi_{\restriction D} (F_k)$
contains $I_{m_k} \setminus I_{m_{k+1}}$, which is non-empty.  We pick
$x \in F_k$.  In particular, $x$ is in
$I_{m_{k-1}} \setminus I_{m_k}$, hence is not in $I_{m_k}$. However,
$x$ is also in
$\varphi_{\restriction D} (F_k) = \varphi_{\restriction D} (F_{k+1})$,
and since
$F_{k+1} \subseteq I_{m_k} \setminus I_{m_{k+1}} \subseteq I_{m_k}$,
$x$ is also in $\varphi_{\restriction D} (I_{m_k}) = I_{m_k}$:
contradiction.  \qed

\subsection*{Proof of the implication $(iii) \Rightarrow (ii)$.}

Let $\varphi$ be the closure on $E$. Let $C$ be a closed set and $D$
be a dense subset of $C$ on which the closure
$\varphi_{\restriction D}$ is well-founded.

On $D$ every closed set $D'$ is a finite union of irreducible closed
sets. This fact goes back to Noether, see \cite[Chapter VIII,
Corollary, p.181]{birkhoff}. Indeed, if $D$ is not such, then, since
the collection of closed sets on $D$ is well-founded, there is a
minimal member $C'$ which is not a finite union of irreducible
members. In particular, $C'$ is non-empty. If $C'$ is the union of two
proper closed subsets, by minimality those closed subsets must be
finite unions of irreducible subsets of $D$, hence so must be $C'$. It
follows that $C'$ is irreducible: contradiction.

Now $D$ is itself closed in $D$, so we can write $D$ as a finite union
of irreducible closed subsets $C_i$ of $D$, $1\leq i\leq n$. For each
$C_i$, $\varphi (C_i)$ is irreducible in $E$, as one easily checks
\cite[Lemma~8.4.10]{goubault-larrecq1}. By density and the fact that
$\varphi$ is topological,
$C = \varphi (D) = \bigcup_{i=1}^n \varphi (C_i)$.  \qed

\subsection*{Proof of the implication $(ii) \Rightarrow (i)$}
\label{sec:proof-ii-rightarrow}

Let $\varphi$ be the closure on $E$ again, and let $X$ be a discrete
subspace. We write $\varphi (X)$ as a finite union of irreducible
closed sets $I_1$, \ldots, $I_n$.

For each $x \in X$, $x$ is in some $I_k$. We claim that
$I_k = \varphi (\{x\})$. To this end, we note that
$I_k \subseteq \varphi (X)= \varphi (\{x\}) \cup \varphi (X\setminus
\{x\})$, since $\varphi$ is topological. Therefore $I_k$ is equal to
the union of the two closed sets $\varphi (\{x\}) \cap I_k$ and
$\varphi (X\setminus \{x\}) \cap I_k$. Since $X$ is discrete, hence
independent, $x$ is not in $\varphi (X\setminus \{x\})$, and since
$x \in I_k$, $\varphi (X\setminus \{x\}) \cap I_k$ is a proper closed
subset of $I_k$. Because $I_k$ is irreducible,
$\varphi (\{x\}) \cap I_k$ cannot be a proper closed subset of $I_k$,
so $\varphi (\{x\}) \cap I_k = I_k$. This means that
$I_k \subseteq \varphi (\{x\})$, and the converse inclusion follows
from $x \in I_k$.

It follows that for any two distinct points $x, y \in X$, $x$ and $y$
cannot be in the same $I_k$. Otherwise
$\varphi (\{x\}) = \varphi (\{y\})$, but since $X$ is independent, $x$
is not in $\varphi (X \setminus \{x\})$, hence not in the smaller set
$\varphi (\{y\})$. That is impossible since
$\varphi (\{x\}) = \varphi (\{y\})$ contains $x$.

Since each $I_k$ can contain at most one point from $X$, $X$ is
finite.  \qed

\section{Remarks and comments}
\label{sec:remarks}

\subsection{Other characterizations}
\label{sec:other-char}

A.H. Stone \cite[Theorem~2]{stone} shows that $(i)$ is equivalent to
two further properties: $(iv)$ every open cover of every subspace $X$
of $T$ has a finite subfamily whose union is dense in $X$, and $(v)$
every continuous real-valued function on every subspace of $T$ is
bounded.

He also shows \cite[Theorem~3]{stone} that, for every topological
space $X$, the fact that $X$ is a finite union of irreducible closed
subsets---the special case of $(ii)$ where the closed set is the whole
of $X$---is equivalent to six other conditions, among which: $(a)$
$X$ is \emph{semi-irreducible}, namely every collection of pairwise
disjoint non-empty open sets is finite, and: $(b)$ $X$ has only
finitely many regular open sets.  He also observes that the
cardinality of such collections must be bounded \cite[Theorem~3,
$(vi)$]{stone}.

In particular, our condition $(ii)$ is equivalent to: $(vi)$ every
closed subspace of $T$ is semi-irreducible, and to: $(vii)$ every
closed subspace of $T$ has only finitely many regular open sets.

We will come back to this in Section~\ref{sec:posets}.

\subsection{Noetherian topological spaces}
\label{sec:noeth-topol-spac}

Noetherian topological spaces have been studied for their own sake by
A.H. Stone \cite{stone}. They are an important basic notion in
algebraic geometry, since the spectrum of any Noetherian ring in a
Noetherian topological space, with the Zariski topology.  They have
also found applications in verification, the domain of computer
science concerned with finding algorithms that prove properties of
other computer systems, automatically \cite{JGL-icalp10}. One can
consult Section~9.7 of \cite{goubault-larrecq1}, which is devoted to
Noetherian topological spaces.


\subsection{A related result on general closure systems}
\label{sec:related-result}

The implication $(i)\Rightarrow (ii)$ follows from the following
result about closure systems, as we will see. We recall that an
\emph{up-directed} subset of a poset $P$ is a non-empty subset $A$ of
$P$ such that any two elements of $A$ have an upper bound in $A$, and
that an \emph{ideal} is an up-directed initial segment. We always
order powersets by inclusion.

\begin{theorem}
  \label{thm:galvin-milner-pouzet}
  \cite[Theorem~1.2]{galvin-milner-pouzet} If a closure system
  $(E, \varphi)$ contains no infinite independent set then: $(*)$
  there are finitely many pairwise disjoint subsets $A_i$ ($i \in I$)
  of $E$ and, for each $A_i$, a proper ideal $N_i$ of $\pow (A_i)$
  such that for every $X\subseteq \bigcup_{i \in I} A_i$, the set $X$
  generates $E$ if and only if $A_i \cap X\not \in N_i$, for each
  $i \in I$.
\end{theorem}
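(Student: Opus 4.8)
The plan is to pass to a well-founded closure system, then split a generating set into finitely many blocks on each of which generation is controlled by an ideal, and finally transfer the conclusion back. For the reduction: by Lemma~\ref{cofinality1}, $E$ has a generating subset $D$ on which $\leq_{\restriction D}$ is well-founded. For $X\subseteq D$, $\varphi(X)=E$ if and only if $\varphi_{\restriction D}(X)=D$: if $\varphi(X)=E$ then $\varphi_{\restriction D}(X)=\varphi(X)\cap D=D$; if $\varphi_{\restriction D}(X)=D$ then $D\subseteq\varphi(X)$, hence $E=\varphi(D)\subseteq\varphi(X)$. Since the sets $A_i$ I will produce lie inside $D$, and $(*)$ only speaks of $X\subseteq\bigcup_iA_i\subseteq D$, it suffices to prove $(*)$ for $(D,\varphi_{\restriction D})$, which still has no infinite independent set (an independent set for $\varphi_{\restriction D}$ is one for $\varphi$). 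So I assume from now on that $\leq$ is well-founded on $E$.

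\emph{Blocks and ideals.} I would call $x,y\in E$ \emph{interchangeable}, written $x\approx y$, when $\varphi(Z\cup\{x\})=E\Leftrightarrow\varphi(Z\cup\{y\})=E$ for every $Z\subseteq E$; this is an equivalence relation. Then I would seek a generating subset $G$ of $E$ that is a union of $\approx$-classes and \emph{irredundant}, i.e., $\varphi(G\setminus A)\neq E$ for each class $A\subseteq G$; obtaining such a $G$ (by discarding redundant classes from $D$ and checking that a generating union survives) is already a step where I expect to use well-foundedness. Writing $\{A_i\}_{i\in I}$ for the classes of $G$, put
\[
  N_i:=\{\,S\subseteq A_i:\varphi(S\cup(G\setminus A_i))\neq E\,\}.
\]
Irredundancy gives $\emptyset\in N_i$; the equality $\varphi(A_i\cup(G\setminus A_i))=\varphi(G)=E$ gives $A_i\notin N_i$; and $N_i$ is downward closed because $S\subseteq S'\in N_i$ forces $\varphi(S\cup(G\setminus A_i))\subseteq\varphi(S'\cup(G\setminus A_i))\subsetneq E$. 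Thus $N_i$ is a non-empty, proper, downward closed family, and the first substantial point is that it is \emph{closed under finite unions}: here the definition of $\approx$ provides a cancellation --- for $x,x'\in A_i$ with $x'\in Z$ one has $\varphi(Z\cup\{x\})=E\Leftrightarrow\varphi(Z\cup\{x'\})=E\Leftrightarrow\varphi(Z)=E$ --- which disposes of binary unions when $\varphi$ is finitary, the general case requiring the no-infinite-independent-set hypothesis to forbid a jump to $E$ at a limit stage.

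\emph{Characterization, and the main obstacle.} For $X\subseteq G$ one has $X\cup(G\setminus A_i)=(A_i\cap X)\cup(G\setminus A_i)$, so ``$A_i\cap X\notin N_i$'' means precisely ``$\varphi(X\cup(G\setminus A_i))=E$''. If $\varphi(X)=E$, this holds for every $i$ since $\varphi(X\cup(G\setminus A_i))\supseteq\varphi(X)$; conversely, assuming $\varphi(X\cup(G\setminus A_i))=E$ for all $i$, I would prove $\varphi(X)=E$ by induction along the well-founded order, using $\approx$ to push every element of $G$, class by class, into $\varphi(X)$. The crux, which I expect to be the main obstacle, is the \emph{finiteness of $I$}: were it infinite, I would choose representatives $x_0,x_1,\dots$ in distinct classes together with sets $Z_n\subseteq E$ witnessing $x_n\not\approx x_{n+1}$, and organize them into a subset $E'$ of $E$ whose induced closure carries an infinite \emph{separating} chain of closed sets in the sense recalled just before Lemma~\ref{independent}; that lemma would then produce an infinite independent set, contrary to hypothesis. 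Turning a mere descending chain built from the witnesses $Z_n$ into a genuine separating one is the delicate step. Once $I$ is finite, all of $A_i$, $N_i$ and the condition $(*)$ live inside $D\subseteq E$, so the statement transfers back to $(E,\varphi)$ without change.
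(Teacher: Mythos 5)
First, note that the paper does not actually prove Theorem~\ref{thm:galvin-milner-pouzet}: it is quoted from \cite{galvin-milner-pouzet}, and only its topological specialization is effectively re-derived here (the implication $(i)\Rightarrow(ii)$ of Theorem~\ref{theorem-main} combined with Claim~\ref{claim2}). Your proposal, however, breaks down at the very step you flag as ``the first substantial point'': the families $N_i$ built from your interchangeability classes need not be ideals, even in the absence of infinite independent sets. Take $E$ to be the disjoint union of two chains $C_1$ and $C_2$, each a copy of $\omega$, with the Alexandroff closure $\varphi (W) := \dc W$. Antichains here have at most two elements, so there is no infinite independent set; the order is already well-founded, and $\varphi$ is finitary and topological. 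A subset $W$ generates iff $W\cap C_1$ and $W\cap C_2$ are both infinite; since adjoining a single point never changes whether a trace is infinite, \emph{every} two points of $E$ are interchangeable in your sense. There is thus a single class, forcing $G=A_1=E$ and $N_1=\{S\subseteq E:\varphi(S)\neq E\}$; but $C_1$ and $C_2$ both belong to $N_1$ while $C_1\cup C_2$ generates. So $N_1$ is not closed under binary unions, and this happens in a finitary system, where you claimed the cancellation argument suffices. The relation $\approx$ is simply too coarse to detect that $E$ splits into two irreducible components.

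The idea you are missing is that the blocks must come from a minimal finite family of irreducible closed sets $X_i$ covering $E$, with $A_i:=X_i\setminus\bigcup_{j\neq i}X_j$ and, crucially, $N_i:=\{A'\subseteq A_i:\varphi(A')\neq X_i\}$: the ideal must record failure to generate the \emph{component} $X_i$, not failure to generate $E$, and it is the irreducibility of $X_i$ that makes $N_i$ closed under unions (this is exactly Claim~\ref{claim2}). In the example above this yields $A_i=C_i$ and $N_i$ the ideal of finite subsets of $C_i$, as it should. Independently of the counterexample, the two remaining load-bearing steps of your plan --- the finiteness of the index set (for which you would have to convert the witnesses $Z_n$ into a separating chain; this is the hard content that the paper's route delegates to $(i)\Rightarrow(ii)$) and the converse implication in $(*)$ --- are announced as ``delicate'' or ``expected'' but not carried out, so even apart from the counterexample what you have is a programme rather than a proof.
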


As it will become apparent in Proposition \ref{prop:topo}, this result
specialized to topological closures is just implication
$(i)\Rightarrow (ii)$. Decompositions of topological closures were
considered in \cite{milner-pouzet82}, but this consequence was totally
missed.

\begin{remark}
  \label{rem:thgmp}
  Condition $(*)$ entails that $\bigcup_{i \in I} A_i$ generates $E$.
\end{remark}

\begin{remark}
  \label{rem:theorgmp}
  In Theorem~\ref{thm:galvin-milner-pouzet}, and if $\varphi$ is
  topological, we may suppose that:
  \begin{equation}\label{equ:key}
    A_i\cap \varphi (\bigcup_{j\in I\setminus\{i\}} A_j)=\emptyset \; \text{for each}\;  i\in I. 
  \end{equation}
\end{remark}
This is Remark~1 of \cite{milner-pouzet82}.  We repeat the argument.
Let $A_i$ and $N_i$ satisfy condition $(*)$ of
Theorem~\ref{thm:galvin-milner-pouzet}.  If $(\ref{equ:key})$ does not
hold, we set
$A'_i:= A_i\setminus \varphi (\bigcup_{j\in I\setminus\{i\}} A_i)$ and
$N'_i:=N_i\cap \pow (A'_i)$.  First, we claim that each $N'_i$ is a
proper ideal of $A'_i$.  To this end, we set
$X:= A'_i \cup \bigcup_{j\not = i} A_j$.  Because $\varphi$ is
topological,
$\varphi (X) = \varphi (A'_i) \cup \varphi (\bigcup_{j\not = i} A_j)
\supseteq A'_i \cup \varphi (\bigcup_{j\not = i} A_j) \supseteq A_i$;
also, $\varphi (X) \supseteq X \supseteq A_j$ for every $j \neq i$, so
$\varphi (X) \supseteq \bigcup_{j \in I} A_j$.  Remark~\ref{rem:thgmp}
then entails that $X$ generates $E$.
By $(*)$, $A'_i \cap X = A'_i$ cannot be in $N_i$;
therefore $A'_i$ is not in $N'_i$, showing that $N'_i$ is a proper
ideal of $\pow (A'_i)$.

Next, let $X\subseteq \bigcup_{i \in I} A'_i$.  Let us assume that $X$
generates $E$.  Since $X\subseteq \bigcup_{i \in I} A_i$, we may use
$(*)$: $A_i \cap X\not \in N_i$, for each $i \in I$. Hence
$A'_i \cap X= A_i\cap X\not \in N'_i$.  Conversely, if
$A'_i \cap X\not \in N'_i$ for each $i \in I$, then
$A'_i \cap X \not\in N_i$, so $A_i \cap X = A'_i \cap X \not\in N'_i$,
hence $X$ generates $E$ by $(*)$.

\begin{proposition}
  \label{prop:topo}
  Let $\varphi$ be a topological closure operator on a set $E$. Then
  $E$ is a finite union of irreducible closed sets iff $E$ has a
  decomposition satisfying condition $(*)$ on generating sets of
  Theorem \ref{thm:galvin-milner-pouzet}.
\end{proposition}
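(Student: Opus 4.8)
The statement is an equivalence, so I would prove the two implications separately, and the natural approach is to make each direction almost a tautology once the dictionary between the two formulations is set up. Recall that the Galvin–Milner–Pouzet decomposition $(*)$ gives pairwise disjoint sets $A_i$ ($i \in I$, $I$ finite) with proper ideals $N_i$ of $\powerset(A_i)$ such that, for $X \subseteq \bigcup_{i \in I} A_i$, $X$ generates $E$ iff $A_i \cap X \notin N_i$ for each $i$; and by Remark~\ref{rem:theorgmp} we may assume moreover that $A_i \cap \varphi(\bigcup_{j \in I \setminus \{i\}} A_j) = \emptyset$ for each $i$. The key observation linking this to irreducibility is that, since $\varphi$ is topological, each $\varphi(A_i)$ is a closed set, and the ideal structure of $N_i$ should correspond exactly to $\varphi(A_i)$ being irreducible: a proper ideal $N_i$ of $\powerset(A_i)$ means that whenever $A_i$ is split into two pieces $B \cup C = A_i$, at least one piece generates $\varphi_{\restriction A_i}$ the whole of $A_i$, i.e.\ is not in $N_i$; translating through $\varphi$, this says $\varphi(A_i)$ is not the union of two proper closed subsets.

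**Direction $\Leftarrow$.** Suppose $E$ has a decomposition as in Theorem~\ref{thm:galvin-milner-pouzet}, with the extra hypothesis $(\ref{equ:key})$. First, the $A_i$ together generate $E$: taking $X = \bigcup_{i} A_i$, each $A_i \cap X = A_i \notin N_i$ since $N_i$ is a proper ideal, so $X$ generates, i.e.\ $\varphi(\bigcup_i A_i) = E$, and since $\varphi$ is topological this is $\bigcup_i \varphi(A_i) = E$. It then remains to show each $\varphi(A_i)$ is irreducible. It is non-empty because $A_i \notin N_i$ and $\emptyset \in N_i$, so $A_i \neq \emptyset$. Suppose $\varphi(A_i) = F_1 \cup F_2$ with $F_1, F_2$ closed; set $B = A_i \cap F_1$, $C = A_i \cap F_2$, so $B \cup C = A_i$. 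If both $B$ and $C$ were in $N_i$, then the set $X := B \cup C \cup \bigcup_{j \neq i} A_j = \bigcup_j A_j$ generates $E$, yet $A_i \cap X = A_i = B \cup C$; here I need $B \cup C \in N_i$, which follows because $N_i$ is an \emph{ideal} (up-directed initial segment) of $\powerset(A_i)$, hence closed under finite unions — contradicting that $X$ generates. So, say, $B \notin N_i$. Then I claim $F_1 \supseteq \varphi(A_i)$: consider $X := B \cup \bigcup_{j \neq i} A_j$; using $(\ref{equ:key})$ one checks $A_i \cap X = B$ (any element of $A_i$ in $\varphi$ of the other part would contradict $(\ref{equ:key})$, modulo care that $B \subseteq A_i$ so $A_i \cap X \supseteq B$ trivially and the reverse needs $(\ref{equ:key})$), and $A_j \cap X = A_j \notin N_j$ for $j \neq i$, so $X$ generates $E$; hence $\varphi(X) = E \supseteq A_i$, but $\varphi(X) = \varphi(B) \cup \bigcup_{j\neq i}\varphi(A_j) \subseteq F_1 \cup \varphi(\bigcup_{j\neq i}A_j)$, and intersecting with $\varphi(A_i)$ and using $(\ref{equ:key})$ (which gives $\varphi(A_i) \cap \varphi(\bigcup_{j\neq i} A_j)$ is, well, at least disjoint from $A_i$ — this is the delicate point) forces $\varphi(A_i) \subseteq F_1$. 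Thus $\varphi(A_i)$ is irreducible, and $E = \bigcup_i \varphi(A_i)$ exhibits $E$ as a finite union of irreducible closed sets.

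**Direction $\Rightarrow$.** Suppose $E = \bigcup_{i=1}^n I_i$ with each $I_i$ irreducible closed. I want to build a decomposition. The naive choice $A_i = I_i$ fails because the $I_i$ need not be disjoint; but I can disjointify. Set $A_i := I_i \setminus \bigcup_{j < i} I_j$ perhaps, or better, pass through the observation that for an irreducible closed set $I$, any generating subset of $I$ must meet $I$ "essentially". Define $N_i$ to be the set of $B \subseteq A_i$ such that $\varphi(B) \cup \varphi(\bigcup_{j \neq i} A_j) \neq E$ — i.e.\ $B$ "fails to help generate". One checks $N_i$ is an initial segment of $\powerset(A_i)$ trivially, and is up-directed (an ideal) precisely using that $I_i$ (or $\varphi(A_i)$) is irreducible: if $B_1, B_2 \in N_i$ then $\varphi(B_1), \varphi(B_2)$ are proper closed subsets of the irreducible $\varphi(A_i)$ after intersecting appropriately, so their union is still proper, so $B_1 \cup B_2 \in N_i$. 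That $N_i$ is proper ($A_i \notin N_i$) comes from the fact that the $I_i$ cover $E$. Finally, the generating criterion: $X \subseteq \bigcup_i A_i$ generates iff $\bigcup_i \varphi(A_i \cap X) = E$ iff each $A_i \cap X \notin N_i$ — the forward direction of this last "iff" is the content and again leans on irreducibility of the $\varphi(A_i)$. Throughout I would use freely that $\varphi$ topological implies $\varphi(\bigcup) = \bigcup \varphi$ on finite unions and that $\varphi$ of an irreducible-generating set is irreducible (\cite[Lemma~8.4.10]{goubault-larrecq1}).

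**Main obstacle.** The genuinely fiddly part is the disjointification in the $\Rightarrow$ direction combined with getting condition $(\ref{equ:key})$ (or at least pairwise disjointness) to hold simultaneously with irreducibility of each $\varphi(A_i)$: removing overlaps $I_i \cap I_j$ can in principle destroy irreducibility or make $\varphi(A_i) \subsetneq I_i$. The cleanest fix is probably to first reduce to an \emph{irredundant} cover (no $I_i$ contained in the union of the others — discard any such $I_i$), then define $A_i$ by peeling off the closures of the other pieces as in Remark~\ref{rem:theorgmp}, i.e.\ $A_i := I_i \setminus \varphi(\bigcup_{j \neq i} I_j)$, and argue that $\varphi(A_i) = I_i$ using irredundancy and irreducibility of $I_i$ (any point of $I_i$ not in $\varphi(A_i)$ lies in $\varphi(\bigcup_{j\neq i}I_j)$, so $I_i = (I_i \cap \varphi(A_i)) \cup (I_i \cap \varphi(\bigcup_{j\neq i}I_j))$ splits $I_i$, and irreducibility plus irredundancy forces the first piece to be all of $I_i$). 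Once $\varphi(A_i) = I_i$ and the $A_i$ are pairwise disjoint with $(\ref{equ:key})$, the ideals $N_i$ defined above do the job, and the verification of the generating criterion is then routine. I expect this reduction-to-irredundant-cover step, and the bookkeeping it forces on the definition of $N_i$, to be where all the real work sits; the rest is translation.
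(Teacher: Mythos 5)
Your proposal is correct and follows essentially the same route as the paper: for the forward direction you pass to an irredundant cover and take $A_i := I_i \setminus \bigcup_{j\neq i} I_j$ with $N_i$ the subsets of $A_i$ that fail to generate $I_i$ (your generation-based definition of $N_i$ coincides with the paper's $\{A'\subseteq A_i: \varphi(A')\neq X_i\}$ once $\varphi(A_i)=I_i$ is established, which you do by the same irreducibility-plus-irredundancy splitting argument), and for the converse you use Condition~(\ref{equ:key}) together with the ideal property of $N_i$ exactly as in the paper's Claim~\ref{claim1}. The ``delicate point'' you flag is not actually delicate: (\ref{equ:key}) gives that $A_i$ itself (not merely $\varphi(A_i)$) is disjoint from $\varphi(\bigcup_{j\neq i}A_j)$, so $A_i\subseteq\varphi(B)\subseteq F_1$ and hence $\varphi(A_i)\subseteq F_1$, which is all you need.
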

\begin{proof}
  The result is a consequence of the following two claims.  Each one
  establishes one direction of the implication.
  \begin{claim}
    \label{claim1}
    Let $A_i$, $N_i$ ($i\in I$) be a finite decomposition satisfying
    condition~$(*)$ on generating sets of
    Theorem~\ref{thm:galvin-milner-pouzet}. According to Remark
    \ref{rem:theorgmp}, we may assume that it satisfies Condition
    (\ref{equ:key}). Then, for every subset $Y$ of $E$, $Y\in N_i$ iff
    $Y\subseteq A_i$ and $\varphi(Y)\not \supseteq A_i$. In
    particular, $X_i:= \varphi(A_i)$ is irreducible.
  \end{claim}
  \begin{proof}[Proof of Claim~\ref{claim1}.]
    By Remark~\ref{rem:thgmp}, $E = \varphi (\bigcup_{j\in I} A_j)$.
    Let us assume that $Y\in N_i$. Since $N_i$ is an initial segment,
    $A_i \cap Y$ is in $N_i$.  For
    $X:=Y\cup \bigcup_{j\in I\setminus \{i\}} A_j$,
    $A_i \cap X = A_i \cap Y$ since the sets $A_j$ are pairwise
    disjoint, so $A_i \cap X$ is in $N_i$.  By $(*)$, $X$ does not
    generate $E$.  If $A_i\subseteq \varphi (Y)$ then
    $E=\varphi (\bigcup_{j\in I} A_j)\subseteq \varphi (\varphi
    (Y)\cup \bigcup_{j\in I\setminus \{i\}} A_j) = \varphi (Y \cup
    \bigcup_{j\in I\setminus \{i\}} A_j)$ (since $\varphi$ is a
    topological closure operator) $= \varphi (X)$, which is
    impossible. Hence $A_i\not \subseteq \varphi(Y)$.

    Conversely, if $\varphi(Y)\not \supseteq A_i$, then there is a
    point $x$ in $A_i$---hence not in
    $\varphi (\bigcup_{j\in I\setminus\{i\}} A_i)$ by
    Condition~(\ref{equ:key})---which is not in $\varphi (Y)$, hence
    not in
    $\varphi(Y) \cup \varphi (\bigcup_{j\in I\setminus \{i\}} A_j)$.
    The latter is equal to $\varphi(X)$, where
    $X := Y\cup \bigcup_{j\in I\setminus \{i\}} A_j$, since $\varphi$
    is topological, so $X$ does not generate $E$. Using $(*)$,
    $A_j \cap X$ is in $N_j$ for some $j \in I$. If $j \neq i$, then
    $A_j \cap X \supseteq A_j$ by definition of $X$, and
    $A_j \cap X \in N_j$ implies $A_j \in N_j$, contradicting the fact
    that $N_j$ is proper.  Therefore $j=i$. This means that
    $A_i \cap X$, which is equal to $A_i \cap Y$ since the sets $A_j$
    are pairwise disjoint, hence to $Y$ since $Y \subseteq A_i$, is in
    $N_i$.

    We finally show that $X_i = \varphi (A_i)$ is irreducible. Since
    $N_i$ is proper, $A_i$ is non-empty, hence $X_i$ is non-empty. Let
    us assume that $X_i$ is the union of two proper closed subsets
    $C_1$ and $C_2$.  We consider $Y:=C_1 \cap A_i$ (resp.,
    $Y:=C_2 \cap A_i$). Then $Y \subseteq A_i$ and
    $\varphi (Y) \subseteq C_1$ (resp., $C_2$); in particular,
    $\varphi (Y)$ cannot contain $X_i = \varphi (A_i)$, hence cannot
    contain $A_i$.  By the first part of the claim, $Y$ is in
    $N_i$.  In other words, both $C_1 \cap A_i$ and $C_2 \cap A_i$ are
    in $N_i$. Since $N_i$ is an ideal,
    $(C_1 \cap A_i) \cup (C_2 \cap A_i) = (C_1 \cup C_2) \cap A_i =
    X_i \cap A_i = A_i$ is in $N_i$, which is impossible since $N_i$
    is proper.
  \end{proof}

  \begin{claim}
    \label{claim2}
    If $E$ is a finite union of irreducible closed sets, let
    $(X_i)_{i \in I}$
    be a family of such sets with $\vert I\vert$ minimum. Set
    $A_i:= X_i\setminus \bigcup _{j\not= i} X_j$ and
    $N_i:= \{A'\subseteq A_i: \varphi (A' )\not = X_i\}$. This
    decomposition satisfies Condition~$(*)$ of
    Theorem~\ref{thm:galvin-milner-pouzet}.
  \end{claim}
  \begin{proof}[Proof of Claim~\ref{claim2}.]
    We check that $N_i$ is an ideal. Given $A', B' \in N_i$,
    $\varphi (A' \cup B') = \varphi (A') \cup \varphi (B')$, since
    $\varphi$ is topological. If that were equal to the whole of
    $X_i$, and since $\varphi (A')$ and $\varphi (B')$ are both proper
    closed subsets of $X_i$, $X_i$ would fail to be irreducible. Hence
    $\varphi (A' \cup B') \neq X_i$, so that $A' \cup B'$ is in $N_i$.

    Then we check that $N_i$ is proper, namely that $A_i$ is not in
    $N_i$. By definition of $A_i$,
    $X_i \subseteq A_i \cup (\bigcup _{j\not= i} X_j)
    \subseteq 
    \varphi (A_i) \cup (\bigcup _{j\not= i} X_j)$, so
    $X_i = (X_i \cap \varphi (A_i)) \cup (X_i\cap (\bigcup _{j\not= i}
    X_j)) = \varphi (A_i) \cup (X_i\cap (\bigcup _{j\not= i} X_j))$, a
    union of two closed subsets. The second one,
    $X_i\cap (\bigcup _{j\not= i} X_j)$, is a proper subset of $X_i$
    since we have chosen a family of least cardinality. Since $X_i$
    is irreducible, the other one cannot be a proper subset.  Therefore
    $\varphi(A_i)=X_i$.  It follows that $A_i$ is not in $N_i$.

    Finally, let $X \subseteq \bigcup_{i\in I} A_i$.

    If $A_i\cap X$ belongs to $N_i$ for no $i \in I$, then by
    definition of $N_i$, $\varphi(A_i\cap X)= X_i$, hence
    $\varphi (X) = \varphi (\bigcup_{i\in I} A_i \cap X) = \bigcup_{i
      \in I} \varphi (A_i \cap X) = \bigcup_{i\in I} X_i = E$.

    Conversely, let us assume that $\varphi (X) = E$. Since
    $X \subseteq \bigcup_{j\in I} A_j$,
    $X = \bigcup_{j \in I} A_j \cap X$, and since $\varphi$ is
    topological,
    $E = \varphi (X) = \bigcup_{j \in I} \varphi (A_j \cap X)$.  We
    recall that $E = \bigcup_{i \in I} X_i$, so for every $i \in I$,
    $X_i \subseteq \bigcup_{j \in I} \varphi (A_j \cap X)$, and since
    $X_i$ is irreducible, there is a $j \in I$ such that
    $X_i \subseteq \varphi (A_j \cap X)$. If $j \neq i$, then
    $X_i \subseteq \varphi (A_j) \subseteq \varphi (X_j) = X_j$, which
    is impossible since we have chosen ${(X_i)}_{i \in I}$ of least
    cardinality. Hence $j=i$, meaning that for every $i \in I$,
    $X_i \subseteq \varphi (A_i \cap X)$. Since
    $\varphi (A_i \cap X) \subseteq \varphi (A_i) \subseteq \varphi
    (X_i) = X_i$, $X_i = \varphi (A_i \cap X)$, and that shows that
    $A_i \cap X$ is not in $N_i$.
  \end{proof}
  \let\qed\relax 
\end{proof}

\subsection{Topological properties versus lattice properties}

Item $(ii)$ in Theorem \ref{theorem-main} is a property about the
lattice of closed sets of a topological space: if two topological
spaces have isomorphic lattices of closed sets, then they both satisfy
$(ii)$ or neither one satisfies it.  A property of a space that only
depends on the isomorphism class of its lattice of closed sets is
called a \emph{lattice property}.  Hence $(ii)$ is a lattice property.

Item $(i)$, too, is a lattice property.  Indeed, as it is well-known,
the existence of an infinite discrete subspace (or more generally of
an infinite independent subset for a closure system) amounts to the
existence of an embedding of $\pow (\N)$, the collection of subsets of
$\N$ ordered by inclusion, into the lattice of closed sets.

It not clear that item $(iii)$ is a lattice property without having a
proof of Theorem \ref{theorem-main}.  In order to see why, let us
consider the \emph{sobrification} $X^s$ of a topological space $X$,
see Section~8.2.3 of \cite{goubault-larrecq1} for example.  This is a
construction that naturally occurs through the contravariant duality
between topological spaces and frames.  To say it briefly, $X^s$ is
the free sober space over $X$
\cite[Theorem~8.2.44]{goubault-larrecq1}.  A \emph{sober space} is a
$T_0$ space in which the irreducible closed subsets are the closures
of single points.  The sobrification $X^s$ can be obtained as the
collection of irreducible closed subsets of $X$, with the topology
whose open sets are (exactly) the sets $\diamond U$ defined as
$\{C \text{ irreducible closed } \mid C \cap U \neq \emptyset\}$,
where $U$ ranges over the open subsets of $X$.  The map
$U \mapsto \diamond U$ is then an order-isomorphism
\cite[Lemma~8.2.26]{goubault-larrecq1}.  It follows that a lattice
property cannot distinguish $X$ from its sobrification $X^s$.  And
density, as used in the statement of item~(iii), is not a lattice
property: any dense subset of $\N$ with the cofinite topology (whose
closed sets are the finite subsets of $\N$ plus $\N$ itself) must be
infinite, but $\N^s$, which is homeomorphic to the space obtained by
adding a new point $\infty$ to $\N$, and whose closed sets are the
finite subsets of $\N$ plus $\N^s$, has a one-point dense subset,
$\{\infty\}$, but an isomorphic lattice of closed sets.

\subsection{The case of posets}
\label{sec:posets}

Theorem \ref{theorem-main} has a well-known predecessor in the theory
of posets. It is worth to recall it.

Let $P$ be a poset, and $A$ be a subset of $P$. An \emph{upper bound}
of $A$ is any $z \in P$ such that $x \leq z$ for every $x \in A$.  Two
elements are \emph{compatible} if they have a common upper bound, and
\emph{incompatible} otherwise.  The set $A$ is \emph{up-independent} if
all its elements are pairwise incompatible; it is \emph{consistent} if
all its elements are pairwise compatible.

The \emph{final segments} of $P$ are the initial segments of $P^d$,
the opposite order; we denote by $\upc A$, resp.\ $\upc a$, the final
segment of $P$ generated by $A\subseteq P$, resp.\ $a\in P$.

The set $\mathbf I(P)$ of initial segments of $P$ is the set of closed
sets of a topology, the Alexandroff topology. In this setting, a
subset $A$ is discrete if and only if it is an antichain, $A$ is dense
if and only if it is cofinal, and $A$ is irreducible if and only if it
is an ideal.
 
A poset $P$
is \emph{well-quasi-ordered} (w.q.o.\ for short) if it is well-founded
and contains no infinite antichain. According to Higman \cite{higman},
$P$ is w.q.o. iff $\mathbf I(P)$ is well-founded.

We recall the following  result (see \cite[Chapter~4]{fraisse}):
\begin{theorem}
  \label{thm:fraisse}
  The following properties are equivalent for a poset $P$:
  \begin{enumerate}[label=(\alph*)]
  \item $P$ contains no infinite antichain;
  \item every initial segment of $P$ is a finite union of ideals;
  \item every initial segment of $P$ contains a cofinal subset which
    is well-quasi-ordered.
  \end{enumerate}
\end{theorem}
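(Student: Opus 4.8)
The plan is to run the cycle $(a)\Rightarrow(c)\Rightarrow(b)\Rightarrow(a)$, exactly as for Theorem~\ref{theorem-main}, transported through the dictionary between $P$ and its Alexandroff topology $\mathbf I(P)$, whose closure $A\mapsto\dc A$ is topological and in which \emph{discrete} means \emph{antichain}, \emph{dense} means \emph{cofinal}, \emph{irreducible closed} means \emph{ideal}, and, by Higman's theorem, \emph{noetherian} means \emph{$\mathbf I(\cdot)$ well-founded}. (It is the specialization of Theorem~\ref{theorem-main} to $\mathbf I(P)$, with $(iii)$ read through Higman's theorem, but I would argue the three implications directly, since in the Alexandroff case none of them needs the separating-chain apparatus of Lemma~\ref{independent}.) For $(a)\Rightarrow(c)$: an initial segment $I$ of $P$ inherits the absence of infinite antichains, so by Hausdorff's theorem that every poset has a well-founded cofinal subset (whose proof via Zorn's Lemma is recalled inside the proof of Lemma~\ref{cofinality1}) I would pick a well-founded cofinal subset $D\subseteq I$; an antichain of $D$ being an antichain of $P$, the poset $D$ is well-founded with no infinite antichain, i.e., well-quasi-ordered.

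For $(c)\Rightarrow(b)$: let $I$ be an initial segment and $D\subseteq I$ a well-quasi-ordered cofinal subset. By Higman's theorem $\mathbf I(D)$ is well-founded, i.e., the induced Alexandroff topology on $D$ is noetherian, so Noether's argument---exactly as in the proof of $(iii)\Rightarrow(ii)$, now applied to $D$---shows that every initial segment of $D$, in particular $D$ itself, is a finite union of ideals of $D$, say $D=J_1\cup\cdots\cup J_n$. I would then push this decomposition up to $I$: each $\dc J_i$ (downward closure taken in $P$) is still up-directed, since a common upper bound in $J_i$ of $a$ and $b$ also dominates every $x\le a$ and $y\le b$; hence $\dc J_i$ is an ideal of $P$, and as $D$ is cofinal in the initial segment $I$ one has $I=\dc D=\dc J_1\cup\cdots\cup\dc J_n$.

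For $(b)\Rightarrow(a)$: I would mimic $(ii)\Rightarrow(i)$. Given an antichain $A$, write $\dc A=J_1\cup\cdots\cup J_n$ with each $J_k$ an ideal. Fix $a\in A$ and an index $k$ with $a\in J_k$. Since $\dc$ is topological, $J_k\subseteq\dc A=\dc a\cup\dc(A\setminus\{a\})$, so $J_k$ is the union of the two initial segments $J_k\cap\dc a$ and $J_k\cap\dc(A\setminus\{a\})$; as $A$ is an antichain, $a\notin\dc(A\setminus\{a\})$, so the second set is a proper subset of $J_k$, and irreducibility of $J_k$ forces the first to equal $J_k$, whence $J_k\subseteq\dc a$; combined with $a\in J_k$ this gives $J_k=\dc a$. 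Hence no two distinct elements of $A$ lie in the same $J_k$ (that would yield $\dc a=\dc b$, so $a=b$), and since $A\subseteq\dc A=\bigcup_{k}J_k$ we conclude that $A$ is finite.

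I expect no step to be a genuine obstacle once this dictionary is in place; the only point deserving some care is the transfer in $(c)\Rightarrow(b)$---checking that downward closures of ideals of $D$ remain ideals of $P$, and that $\dc D=I$---which is exactly what lets the finite decomposition obtained on the cofinal subset $D$ carry over to the initial segment $I$.
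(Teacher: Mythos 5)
Your proof is correct and follows essentially the same route as the paper: Hausdorff's well-founded cofinal subset for $(a)\Rightarrow(c)$, Higman plus Noether's minimal-counterexample argument and pushing the ideals up via $\dc$ for $(c)\Rightarrow(b)$, and the observation that an ideal meets an antichain in at most one point for $(b)\Rightarrow(a)$. The only cosmetic difference is that in $(b)\Rightarrow(a)$ you transcribe the irreducibility argument from $(ii)\Rightarrow(i)$ of the main theorem, where the paper uses the shorter direct remark that an up-directed set cannot contain two elements of an antichain; both are fine.
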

\begin{proof}
  This is just Theorem~\ref{theorem-main} applied to $P$ with the
  Alexandroff topology, provided one notes that a poset is
  well-quasi-ordered if and only if it is Noetherian in its
  Alexandroff topology. But the proof simplifies.

  $(a)\Rightarrow (c)$. Let $P'$ be an initial segment. By an already
  cited result of Hausdorff, $P'$ contains a well-founded cofinal
  subset $A$. Since $P$ has no infinite
  antichain, $P'$ has no infinite antichain; being well-founded it is
  w.q.o.

  $(c)\Rightarrow (b)$. Let $P'$ be an initial segment and $A$ be a
  cofinal subset of $P'$ which is w.q.o. Being w.q.o., $A$ is a finite
  union of ideals $I_1, \dots, I_q$. This is a basic result of the
  theory of w.q.o. \cite[Chapter VIII, Corollary, p.181]{birkhoff}.
  Indeed, as in the proof of implication $(iii)\Rightarrow (ii)$ of
  Theorem~\ref{theorem-main}, replacing ``closed'' by ``initial
  segment'' and ``irreducible closed'' by ``ideal'', if $A$ is not
  such, then, since $\mathbf {I}(A)$ is well-founded, there is a
  minimal member $A'\in \mathbf {I}(A)$ which is not a finite union of
  ideals. This $A'$ is irreducible, hence is an ideal: contradiction.

  Now $P'= \dc A= \dc I_1 \cup \dots \dc I_q$ and
  the set $\dc I_i$ are ideals of $P'$.

  $(b)\Rightarrow (a)$. Let $A$ be an antichain of $P$. An ideal $I$
  of $\dc A$ cannot contain more than one element of $A$. Since
  $\dc A$ is a finite union of ideals, $A$ is finite.
\end{proof}

A direct proof of $(a)\Rightarrow (b)$ can be obtained from a special
case of a result of Erd\H os and Tarski \cite{erdos-tarski}.  This
special case is a prototypal min-max result which has been overlooked,
and which states the following:
\begin{remark}[Erd\H os-Tarski]
  \label{rem:ET}
  If a poset $P$ contains no infinite up-independent set then there is
  a finite upper bound on the cardinality of up-independent sets. In
  this case, the maximum cardinality of up-independent sets, the least
  number of ideals whose union is $P$ and the least number of
  consistent sets whose union is $P$ are equal.
\end{remark}
We will dispense with the proof, as it will be a consequence of
Proposition~\ref{prop:ET:top} below.  Now, assuming $(a)$, we can
prove $(b)$ as follows.  Let $I$ be an initial segment of $P$.  Since
by $(a)$ there is no infinite antichain in $P$, $I$ does not contain
any infinite antichain either.  In the subposet $I$, the
up-independent subsets are all finite, since every up-independent
subset is an antichain.  Their maximum cardinality is then the least
number of ideals whose union is $I$, by Remark~\ref{rem:ET}.

Just as Theorem~\ref{theorem-main} generalizes
Theorem~\ref{thm:fraisse}, we will generalize Remark~\ref{rem:ET} to
the topological setting in Proposition~\ref{prop:ET:top}.

In the topological setting, we replace up-independence with
\emph{relative Hausdorffness}, defined as follows.  A subset $L$ of a
topological space $T:=(X, \mathcal F)$ is \emph{relatively Hausdorff}
if and only if for every two distinct points $x$ and $y$ of $L$, we
can find two disjoint open neighborhoods $U$ of $x$ and $V$ of $y$
\emph{in $X$} (not in the subspace $L$ itself, whence ``relative'').
In particular, $X$ is relatively Hausdorff in itself if and only if it
is Hausdorff.  Given a poset $P$ in its Alexandroff topology, every
point $x$ has a smallest open neighborhood, which is $\upc x$.  Then
$L \subseteq P$ is relatively Hausdorff if and only if for every two
distinct points $x$ and $y$ of $L$, $\upc x$ and $\upc y$ are disjoint
in $P$, if and only if $L$ is up-independent.

\begin{remark}
  \label{rem:relH}
  Every relatively Hausdorff subset $L$ of a topological space $X$ is
  Hausdorff in the subspace topology.  However, the reverse
  implication fails in general.  In order to see this, let us consider
  any space $X^\top$ obtained by adding an element $\top$ to a
  Hausdorff space $X$ and requiring that the non-empty open subsets of
  $X^\top$ are those of the form $U \cup \{\top\}$, where $U$ ranges
  over the open subsets of $X$.  Then $X$ is Hausdorff in the subspace
  topology from $X^\top$, but is not relatively Hausdorff in $X^\top$
  unless $X$ has at most one element.
\end{remark}

We also replace consistency by hyperconnectedness.  A
\emph{hyperconnected} space is a non-empty space in which any two
non-empty open sets intersect.
In a poset $P$ with the Alexandroff topology, a subspace $Q$ is
hyperconnected in the induced topology if and only of the smallest
open neighborhoods $\upc x$ and $\upc y$ of any two points $x$ and $y$
intersect, if and only if $Q$ is consistent.  Hence the following has
Remark~\ref{rem:ET} as a special case.

\begin{proposition}[Min-max]
  \label{prop:ET:top}
  In a topological space $T:=(X, \mathcal F)$ with no infinite
  relatively Hausdorff subset, there is a finite upper bound on the
  cardinalities of relatively Hausdorff subsets.  In this case, the
  following numbers exist and are equal:
  \begin{enumerate}[label=(\alph*)]
  \item the maximum cardinality of relatively Hausdorff subsets of
    $X$;
  \item the maximum number of pairwise disjoint non-empty open subsets
    of $X$.
  \item the least number of irreducible closed subsets whose union is
    $X$;
  \item the least number of hyperconnected subspaces of $X$ whose
    union is $X$.
  \end{enumerate}
\end{proposition}
\begin{proof}
  We write $\varphi$ for closure in $T$.  We proceed by making a
  series of observations.

  \emph{Observation 1.}  Given any relatively Hausdorff subset $Y$ of
  $X$, say of cardinality $n$, we can find $n$ pairwise disjoint
  non-empty open subsets of $X$.  In order to see this, let us write
  $Y$ as $\{y_1, \cdots, y_n\}$.  Since $Y$ is relatively Hausdorff,
  for every pair of indices $i$, $j$ with $1\leq i<j \leq n$, there
  are disjoint open sets $U_{ij}$ and $V_{ij}$ such that
  $y_i \in U_{ij}$ and $y_j \in V_{ij}$.  For each $y := y_i$ in $Y$,
  let $U_y := \bigcap_{j>i} U_{ij} \cap \bigcap_{j<i} V_{ji}$.  Then
  $y \in U_y$, and the sets $U_y$ are pairwise disjoint.

  \emph{Observation 2.} Given $n$ pairwise disjoint non-empty open
  subsets $U_1$, \ldots, $U_n$ of $X$, there is a relatively Hausdorff
  subset $Y$ of $X$ of cardinality $n$: we simply pick one point from
  each $U_i$.

  \emph{Observation 3.}  Every irreducible closed subset of $X$ is
  hyperconnected.  Indeed, let $C$ be irreducible closed in $X$.  We
  consider two non-empty open subsets of $C$, necessarily of the form
  $U \cap C$ and $V \cap C$, where $U$ and $V$ are open in $X$.  Then
  $C \setminus U$ and $C \setminus V$ are proper closed subsets of
  $C$, and since $C$ is irreducible,
  $C \neq (C \setminus U) \cup (C \setminus V)$; in other words,
  $U \cap V \neq \emptyset$.

  \emph{Observation 4.}  The closure (in $T$) of every hyperconnected
  subspace is irreducible closed.  Indeed, let $U$ be a hyperconnected
  subspace of $T$, and let us assume that we can write $\varphi (U)$
  as the union of two proper closed subsets $C_0$ and $C_1$.  Since
  $C_0$ and $C_1$ are proper, $V_0 := X \setminus C_0$ and
  $V_1 := X \setminus C_1$ both intersect $\varphi (U)$.  Hence they
  both intersect $U$.  It follows that $U \cap V_0$ and $U \cap V_1$
  are non-empty open subsets of $U$, and since $U$ is hyperconnected,
  they must intersect.  But
  $(U \cap V_0) \cap (U \cap V_1) = U \setminus (C_0 \cup C_1)
  \subseteq \varphi (U) \setminus (C_0 \cup C_1) = \emptyset$, which
  is a contradiction; so $\varphi (U)$ is irreducible.

  \emph{Observation 5.}  Every non-empty subset $U$ of $X$ contains a
  hyperconnected open subset.  Otherwise, $U$ is bad, where we call
  \emph{bad} any non-empty open subset of $X$ with no hyperconnected
  open subset.  We form an infinite binary tree $\mathcal T$ as
  follows, whose vertices $s$ are all labeled by bad sets $V_s$.
  (Here $s$ ranges over the finite 0-1 strings, and we write
  $\epsilon$ for the empty string, $s0$ and $s1$ for the string $s$
  with $0$, resp.\ $1$, appended to its end.)  Its root is
  $V_\epsilon := U$.  Given any vertex $s$ labeled with a bad set
  $V_s$, since $V_s$ itself is not hyperconnected, there are two
  non-empty open subsets $V_{s0}$ and $V_{s1}$ of $V$ whose
  intersection is empty.  The sets $V_{s0}$ and $V_{s1}$ are
  themselves bad, since they are included in the bad set $V_s$.  We
  label the two successors $s0$ and $s1$ of $s$ by $V_{s0}$ and by
  $V_{s1}$ respectively.  Now that $\mathcal T$ has been built, the
  collection $\{V_{0^n1} : n \in \N\}$ consists of infinitely many
  pairwise disjoint non-empty open sets.  We pick $x_n \in V_{0^n1}$
  for each $n \in \N$, and then $\{x_n : n \in \N\}$ is an infinite
  relatively Hausdorff subset of $X$, since the open sets $V_{0^n1}$
  are pairwise disjoint; and this is impossible.  This shows that $U$
  cannot be bad, proving the claim.

  \emph{Observation 6.}  Let us call \emph{sieve} any collection $S$
  of pairwise disjoint hyperconnected open sets.  We note that every
  sieve is finite, by Observation~2.  Every sieve $S$ is included in a
  maximal sieve $S_{\max}$, with respect to inclusion, by Zorn's
  Lemma, and if $S_{\max}$ is any maximal sieve, say of cardinality
  $n$, then we can find a minimal cover of $X$ by irreducible closed
  sets, of cardinality $n$.

  This is proved as follows.   We claim that
  $\bigcup_{U \in S_{\max}} U$ is dense in $X$.  Otherwise
  $X \setminus \varphi (\bigcup_{U \in S_{\max}} U)$ would be a
  non-empty open set, hence it would contain a hyperconnected open set
  $V$ (by Observation~5), and then $S_{\max} \cup \{V\}$ would be a
  strictly larger sieve, contradicting maximality.

  For every $U \in S_{\max}$, $U$ is hyperconnected, so $\varphi (U)$
  is irreducible closed, by Observation~4.  Since
  $\bigcup_{U \in S_{\max}} U$ is dense in $X$ and since $\varphi$ is
  topological, $X$ is covered by the $n$ irreducible closed sets
  $\varphi (U)$, $U \in S_{\max}$.

  The cover $\{\varphi (U) \mid U \in S_{\max}\}$ is minimal, namely
  removing any element would fail to produce a cover.  Indeed, let $U$
  be any element of $S_{\max}$, and let us pick a point $x$ from $U$.
  Then
  $x \not\in \bigcup_{V \in S_{\max} \setminus \{U\}} \varphi (V)$,
  since otherwise $U$ would intersect $\varphi (V)$ for some
  $V \in S_{\max} \setminus \{U\}$, hence also $V$ itself.

  \emph{Observation 7.}  Any two minimal finite covers
  $C_1, \cdots, C_m$ and $C'_1, \cdots, C'_n$ of $X$ by irreducible
  closed subsets have the same cardinality.

  For each $i \in \{1, \cdots, m\}$,
  $C_i \subseteq X = \bigcup_{j=1}^n C'_j$.  Since $C_i$ is
  irreducible, is is easy to see that $C_i \subseteq C'_j$ for some
  $j \in \{1, \cdots, n\}$.  We pick one such $j$ and call it $f (i)$;
  therefore $f$ is a map from $\{1, \cdots, m\}$ to $\{1, \cdots, n\}$
  such that $C_i \subseteq C'_{f (i)}$ for every
  $i \in \{1, \cdots, m\}$.  Similarly, there is a map
  $g \colon \{1, \cdots, n\} \to \{1, \cdots, m\}$ such that
  $C'_j \subseteq C_{g (j)}$ for every $j \in \{1, \cdots, n\}$.  For
  every $i$, we have
  $C_i \subseteq C'_{f (i)} \subseteq C_{g (f (i))}$.  If
  $i \neq g (f (i))$, then removing $C_i$ from the list
  $C_1, \cdots, C_m$ would still produce a cover of $X$ by irreducible
  closed subsets, contradicting minimality; so $i=g(f (i))$, for every
  $i \in \{1, \cdots, m\}$.  Similarly, $j = f (g (j))$ for every
  $j \in \{1, \cdots, n\}$.  Therefore $f$ and $g$ are mutually
  inverse, and $m=n$.
  
  \emph{Observation 8.}  Given any minimal finite cover of $X$ by
  irreducible closed subsets $C_1$, \ldots, $C_n$, there is a
  relatively Hausdorff subset of $X$ of cardinality $n$.  Indeed, for
  every $i \in \{1, \cdots, n\}$, by minimality
  $C_i \not\subseteq \bigcup_{k \neq i} C_k$, so we can pick a point
  $x_i \in C_i \setminus \bigcup_{k \neq i} C_k$.  The subset
  $Y := \{x_1, \cdots, x_n\}$ is relatively Hausdorff, since for all
  $i < j$, $x_i$ and $x_j$ are separated by the disjoint open sets
  $X \setminus \bigcup_{k \neq i} C_k$ and
  $X \setminus \bigcup_{k \neq j} C_k$ in $X$.
  
  Those observations allow us to prove the proposition as follows.  By
  Observations~1 and~2, the numbers that we can obtain as
  cardinalities of relatively Hausdorff subsets of $X$ or as
  cardinalities of collections of pairwise disjoint non-empty subsets
  of $X$ are the same.  By Observation~5, those are also the possible
  cardinalities of sieves, and by Observation~6 they are bounded from
  above; let $n_{\max}$ be their maximum.  Observation~6 also tells us
  that $n_{\max} \leq n_{\min}$ (and that $n_{\min} < \infty$), where
  $n_{\min}$ is the cardinality of some minimal cover of $X$ by
  irreducible closed sets.  By Observation~7, $n_{\min}$ is,
  equivalently, the least number of irreducible closed sets needed to
  cover $X$.  Observation~8 tells us that $n_{\min} \leq n_{\max}$, so
  $n_{\min} = n_{\max}$.

  Finally, if $n$ hyperconnected sets suffice to cover $X$, then their
  closures are irreducible closed and cover $X$ by Observation~4, and
  conversely, if $n$ irreducible closed sets cover $X$, then they are
  hyperconnected by Observation~3, so $n_{\min}$ is also the least
  number of hyperconnected sets covering $X$.
\end{proof}

We have already mentioned that the Erd\H os-Tarski result
(Remark~\ref{rem:ET}) is the special case of the equality of the
numbers mentioned in items~$(a)$ and $(d)$ of
Proposition~\ref{prop:ET:top}, when $X$ is a poset $P$ with its
Alexandroff topology.  While the Erd\H os-Tarski result implies the
$(a) \Rightarrow (b)$ direction of Theorem~\ref{thm:fraisse}, it is
not clear that Proposition~\ref{prop:ET:top} would entail the
$(i) \Rightarrow (ii)$ direction of Theorem~\ref{theorem-main}.  In
analogy with the setting of posets, where we had used the fact that
every up-independent subset is an antichain, we would need to say that
every relatively Hausdorff subset is discrete, but this is clearly
wrong.  (Consider any non-discrete Hausdorff space.)  However, this
does hold in FAC spaces, as we now see.  Since the proof uses
Proposition~2.7 of \cite{goubault-larrecq}, which relies on
Theorem~2.1 there, which itself is the equivalence of $(i)$ and $(ii)$
of Theorem~\ref{theorem-main}, it would be a fallacy to use it to
derive $(i) \Rightarrow (ii)$, though.
%
%
But we obtain the following, which we offer as our conclusion.  (A
KC-space is a space in which every compact subset is closed.)
\begin{theorem}
  \label{thm:haus:discr}
  A topological space $T:=(E, \mathcal F)$ is a FAC space, namely
  satisfies the equivalent conditions~$(i)$, $(ii)$ or $(iii)$ of
  Theorem~\ref{theorem-main}, or A.H. Stone's equivalent
  conditions~$(iv)$--$(vii)$ (see Section~\ref{sec:other-char}), if
  and only if it satisfies any of the following equivalent conditions:
  \begin{enumerate}
  \item[(viii)] No infinite subspace of $T$ is both sober and $T_1$;
  \item[(ix)] No infinite subspace of $T$ is a KC-space;
  \item[(x)] No infinite subspace of $T$ is Hausdorff;
  \item[(xi)] For no subspace $X$ of $T$ is there any infinite subset
    of $X$ that is relatively Hausdorff in $X$.
  \end{enumerate}
  Then the min-max conditions of Proposition~\ref{prop:ET:top} hold on
  every subspace $X$ of $T$.
\end{theorem}
\begin{proof}
  Let $\varphi$ be closure in $T$.  We claim that if
  $T = (E, \mathcal F)$ is a FAC space, then every subspace $L$ of $E$
  is a FAC space.  Indeed, given any infinite subset $A$ of $L$, $A$
  is not discrete in $E$ since $E$ is FAC, so there is a point $x$ in
  $A$ such that $x \in \varphi (A \setminus \{x\})$.  Then
  $x \in \varphi (A \setminus \{x\}) \cap L = \varphi_{\restriction L}
  (A \setminus \{x\})$, showing that $A$ is not discrete in $L$.

  Proposition~2.7 of \cite{goubault-larrecq} states that, for a FAC
  space, it is equivalent to be sober and $T_1$, or a KC-space, or
  Hausdorff, or finite and discrete.  Hence, if $T$ is a FAC space,
  then $(viii)$--$(x)$ hold.  Since every relatively Hausdorff subset
  is Hausdorff as a subspace (see Remark~\ref{rem:relH}), the
  implication $(x) \Rightarrow (xi)$ follows.  The implications
  $(viii) \Rightarrow (x)$ and $(ix) \Rightarrow (x)$ follow from the
  fact that every Hausdorff space is sober, $T_1$, and a KC-space.
  Finally, if $(xi)$ holds, then Proposition~\ref{prop:ET:top}
  applies, and item~$(c)$ of that proposition (applied to the case of
  a closed subspace $X$) implies item~$(ii)$ in
  Theorem~\ref{theorem-main}, showing that $T$ is a FAC space.
\end{proof}

\section*{Acknowledgments}
\label{sec:acknowledgments}

We thank the anonymous reviewer, and we are grateful to Akira Iwasa,
who found a mistake in a previous version of
Theorem~\ref{thm:haus:discr}.


\begin{thebibliography}{GMP91}

\bibitem[Bir79]{birkhoff}
Garrett Birkhoff.
\newblock {\em Lattice Theory}, volume~25.
\newblock American Mathematical Society Colloquium Publications, Providence,
  Rhode Island, corrected reprint of the 1967, third edition, 1979.

\bibitem[Cha09]{chakir}
Ilham Chakir.
\newblock {\em Chains Conditions in Algebraic Lattices}.
\newblock Th\`ese d'\'etat, Universit\'e {M}ohammed {V}, Rabat, 2009.
\newblock \url{arXiv:1609.07167}.

\bibitem[Coh81]{Cohn:alg:1981}
Paul~Moritz Cohn.
\newblock {\em Universal Algebra}, volume~6 of {\em Mathematics and Its
  Applications}.
\newblock Springer Dordrecht, 1981.
\newblock Originally published by Harper and Row, 1965.

\bibitem[CP18]{chakir-pouzet}
Ilham Chakir and Maurice Pouzet.
\newblock A characterization of well-founded algebraic lattices.
\newblock {\em Contribution to Discrete Mathematics}, 13(1):35--50, 2018.
\newblock \url{arXiv:0812.2300}.

\bibitem[ET43]{erdos-tarski}
Paul Erd{\H o}s and Alfred Tarski.
\newblock On families of mutually exclusive sets.
\newblock {\em Annals of Mathematics}, 2(44):315--329, 1943.

\bibitem[Fra00]{fraisse}
Roland Fra{\"\i}ss{\'e}.
\newblock {\em Theory of Relations}, volume 145 of {\em Studies in Logic and
  the Foundations of Mathematics}.
\newblock North-Holland, 2000.
\newblock Second edition, first edition published as volume 118, 1986.

\bibitem[GL10]{JGL-icalp10}
Jean Goubault-Larrecq.
\newblock Noetherian spaces in verification.
\newblock In S.~Abramsky, F.~Meyer{ }auf{ }der{ }Heide, and P.~Spirakis,
  editors, {\em Proceedings of the 37th International Colloquium on Automata,
  Languages, and Programming (ICALP'10)--Part~{II}}, pages 2--21. Springer
  Verlag LNCS 6199, 2010.

\bibitem[GL13]{goubault-larrecq1}
Jean Goubault-Larrecq.
\newblock {\em Non-{H}ausdorff Topology and Domain Theory, Selected Topics in
  Point-Set Topology}, volume~22 of {\em New Mathematical Monographs}.
\newblock Cambridge University Press, 2013.

\bibitem[GL19]{goubault-larrecq}
Jean Goubault-Larrecq.
\newblock Spaces with no infinite discrete subspace.
\newblock {\em Topology Proceedings}, 53:27--36, 2019.
\newblock \url{hal-01804970}.

\bibitem[GMP91]{galvin-milner-pouzet}
Fred Galvin, Eric~Charles Milner, and Maurice Pouzet.
\newblock Cardinal representations for closures and preclosures.
\newblock {\em Transactions of the American Mathematical Society},
  328(2):667--693, 1991.

\bibitem[Hig52]{higman}
Graham Higman.
\newblock Ordering by divisibility in abstract algebras.
\newblock {\em Proceedings of the London Mathematical Society}, 2(7):326--336,
  1952.

\bibitem[K{\H o}n27]{Konig:lemma}
D{\'e}nes K{\H o}nig.
\newblock {\"U}ber eine {S}chlussweise aus dem {E}ndlichen ins {U}nendliche.
\newblock {\em Acta Scientiarum Mathematicarum}, 3(2--3):121--130, 1927.

\bibitem[MP82]{milner-pouzet82}
Eric~Charles Milner and Maurice Pouzet.
\newblock On the cofinality of partially ordered sets.
\newblock In Ivan Rival, editor, {\em Ordered Sets}, pages 279--298, Dordrecht,
  1982. Springer Netherlands.

\bibitem[Sto60]{stone}
Arthur~Harold Stone.
\newblock Hereditarily compact spaces.
\newblock {\em American Journal of Mathematics}, 82(4):900--916, 1960.

\end{thebibliography}

\end{document}